\documentclass[12pt]{amsart}   
\linespread{1.1}               
\usepackage{amsmath,amsthm,amssymb,marvosym}
\usepackage[english]{babel}
\usepackage[autostyle]{csquotes}
\textwidth=435pt \evensidemargin=8pt \oddsidemargin=8pt
\marginparsep=8pt \marginparpush=8pt \textheight=660pt         
\topmargin=-20pt
\usepackage{graphicx,subfig,wrapfig}
\usepackage{enumitem, ragged2e}
\usepackage{lipsum,hyperref}
\newtheorem{theorem}{Theorem}[section]
\newtheorem{Lemma}[theorem]{Lemma}
\newtheorem{Corollary}[theorem]{Corollary}
\newtheorem{Definition}[theorem]{Definition}
\newtheorem{Proposition}[theorem]{Proposition}
\newtheorem{Remark}[theorem]{Remark}
\newtheorem{Example}[theorem]{Example}

\begin{document} 
\title[Stability Theorems in Pointwise Dynamics]{Stability Theorems in Pointwise Dynamics}
\author[Abdul Gaffar Khan and Tarun Das]{Abdul Gaffar Khan$^{1}$ and Tarun Das$^{1}$}

\subjclass[2010]{Primary: 54H20; Secondary: 37C75, 37C50.}
\keywords{Expansive, Shadowing, Stability, Borel measure\vspace*{0.08cm}\\ 
\vspace*{0.01cm}
\Letter{Tarun Das} \\
\vspace*{0.08cm}
tarukd@gmail.com \\
\vspace*{0.01cm}
Abdul Gaffar Khan \\
\vspace*{0.08cm}
gaffarkhan18@gmail.com\\
$^{1}$\textit{Department of Mathematics, Faculty of Mathematical Sciences,} 
\textit{University of Delhi, Delhi, India.} 
}
\begin{abstract}
We introduce minimally expansive and GH-stable points for homeomorphisms on metric spaces and $\mu$-uniformly expansive, $\mu$-shadowable and strong $\mu$-topologically stable points for Borel measures (with respect to a homeomorphism on a metric space). 
We prove that: (i) minimally expansive shadowable point of a homeomorphism on a compact metric space is topologically stable and GH-stable. (ii) $\mu$-uniformly expansive $\mu$-shadowable point for a Borel measure $\mu$ (with respect to a homeomorphism on a compact metric space) is strong $\mu$-topologically stable. 
\end{abstract} 
\maketitle 

\section{Introduction} 
In the study of dynamical systems, the theory of shadowing property is a well established branch. The system with shadowing property forces a numerically computed orbit to follow an actual trajectory of the system. It plays a significant role in guaranteeing the positivity of topological entropy \cite[Theorem 3]{MI} and the topological stability \cite[Theorem 4]{WO} of a system. 

The notion of topological stability was first studied for Anosov diffeomorphisms by Walters in \cite{WA}. In \cite{WO}, Walters has extended this notion for the class of homeomorphisms and proved that expansive homeomorphisms with shadowing property on compact metric spaces are topologically stable. The important component in the hypothesis of Walters stability theorem called expansivity, were firstly studied by Utz in \cite{UU} for homeomorphisms on compact metric spaces. 
In \cite{RP}, Reddy started studying expansive behaviour of a map from local viewpoint and constructed a homeomorphism on a compact metric space which itself is not expansive but expansive at each point. Another stronger variant of an expansive point have also been introduced in \cite{ARP} with the name of uniformly expansive point. In \cite{ARP}, authors have proved that a homeomorphism on a compact metric space is expansive if and only if each point of a phase space is uniformly expansive. They have also proved that a homeomorphism on a compact space admitting non-periodic, shadowable \cite{MS}, uniformly positively expansive, non-wandering and non-isolated point has positive topological entropy \cite[Theorem 1]{ARP}. 
In \cite{DKDM}, authors have introduced expansive points for Borel measures and studied the connection of shadowable points with specification and Devaney chaotic points. 
In \cite{MS}, Morales has proved that a homeomorphism on a compact metric space has shadowing property if and only if each point of a phase space is shadowable. Some other variants of shadowable and expansive points have also been investigated in \cite{DKDM, KP, KQ}. 
Recently, Koo et. al. have studied the topological stability of a shadowable point. Precisely, they have proved that every shadowable point of an expansive homeomorphism on a compact metric space is topologically stable \cite[Corollary 3.16]{KLMP}. It is natural to find connection between the shadowing and the topological stability of an expansive point and its variants. 

In \cite{LMT}, Lee and Morales have introduced shadowing and topological stability for Borel measures with respect to a homeomorphism on a compact metric space and proved that every expansive measure with shadowing on a compact metric space is topologically stable. Recently Arbieto and Rojas in \cite{ART}, have introduced Gromov-Hausdorff distance to measure the distance between two maps on arbitrary phase spaces and then they have used it to introduce GH-stable systems. They have proved that an expansive homeomorphism on a compact metric space with shadowing property is GH-stable \cite[Theorem 4]{ART}.  

This paper is distributed as follows. In Section 2, we introduce minimally expansive points and discuss the dynamics of maps with uniformly expansive points and minimally expansive points. We provide sufficient conditions through which the type of expansivity of a point of a homeomorphism can be predicted by observing the behaviour of a sequence of functions uniformly converging to it. For the study of a relation between the global dynamics of a sequence of functions and its uniform limit, reader can refer to \cite{FDA, LA, SU}. We prove that every minimally expansive shadowable point of a homeomorphism on a compact metric space is topologically stable and GH-stable. In Section 3, we introduce $\mu$-uniformly expansive, $\mu$-shadowable and $\mu$-topologically stable points for Borel measures (with respect to a homeomorphism on a compact metric space). We prove that every $\mu$-uniformly expansive $\mu$-shadowable point for a Borel measure $\mu$ (with respect to a homeomorphism on a compact metric space) is strong $\mu$-topologically stable.

\section{\small{Minimally Expansive and GH-Stable Points}}
In this section, we introduce the notion of minimally expansive points of a homeomorphism on a metric space and  discuss the dynamics of maps having uniformly expansive points and minimally expansive points. Then, we introduce GH-stable points of a homeomorphism on a compact metric space and prove variants of stability theorems in pointwise setting. Firstly, we recall necessary notions required in this section.
\medskip

Throughout this paper, $(X, d^{X})$ and $(Y, d^{Y})$ denote compact metric spaces (if no confusion arises, then we write $d$ for the metric on $X$). For a given $\epsilon > 0$ and for each $x\in X$, we set $B(x, \epsilon) = \lbrace y\in X : d(x,y) < \epsilon \rbrace$ and $B[x, \epsilon] = \lbrace y\in X : d(x,y) \leq \epsilon \rbrace$.  
For a given self homeomorphism $f$ on $X$, the set $\mathcal{O}_{f}(x) = \lbrace f^{n}(x) : n\in \mathbb{Z}\rbrace$ denotes the orbit of $x\in X$, under $f$.  
\medskip

We say that a homeomorphism $f$ on $X$ is expansive on a subset $A\subset X$, if there exists a $\mathfrak{c} > 0$ (known as expansivity constant) such that for every pair of distinct points $x, y\in A$, there exists an $n\in \mathbb{Z}$ satisfying $d(f^{n}(x), f^{n}(y)) > \mathfrak{c}$. A homeomorphism $f$ on $X$ is said to be an expansive homeomorphism, if $f$ is expansive on $X$ \cite{UU}. A point $x\in X$ is said to be an expansive point of a homeomorphism $f$ on $X$, if there exists a $\mathfrak{c} > 0$ (known as expansivity constant of $f$ at $x$) such that for each $y\in X$ distinct from $x$, there exists an $n\in \mathbb{Z}$ satisfying $d(f^{n}(x), f^{n}(y)) > \mathfrak{c}$. A homeomorphism $f$ on $X$ is said to be a pointwise expansive homeomorphism, if every point in $X$ is an expansive point of $f$ \cite{RP}. A point $x\in X$ is said to be a uniformly expansive point of a homeomorphism $f$ on $X$, if there exists a $\mathfrak{c} > 0$ such that $f$ is expansive on $B(x, \mathfrak{c})$ with expansivity constant $\mathfrak{c}$ \cite{ARP}. The set of all uniformly expansive points of a homeomorphism $f$ on $X$ is denoted by $U_{f}(X)$. For every pair $x, y\in X$ and for each $\epsilon > 0$, we set $E_{y}(f, x, \epsilon) = \lbrace n\in \mathbb{Z} : d(f^{n}(x), f^{n}(y)) > \epsilon \rbrace$. 
\medskip

A sequence $\rho = \lbrace x_{n}\rbrace_{n\in \mathbb{Z}}$ in $X$  is said to be through a subset $B$ of $X$, if $x_{0}\in B$. 
Let $f$ be a homeomorphism on $X$ and let $\delta > 0$. A sequence $\rho = \lbrace x_{n}\rbrace_{n\in \mathbb{Z}}$ in $X$ is said to be a $\delta$-pseudo orbit for $f$ through $x$, if $x_{0} = x$ and $d(f(x_{n}), x_{n+1}) < \delta$, for each $n\in \mathbb{Z}$. A sequence $\rho$ is said to be $\delta$-traced through $f$, if there exists a $y\in X$ such that $d(f^{n}(y), x_{n}) < \delta$, for each $n\in \mathbb{Z}$. A point $x\in X$ is said to be a shadowable point of $f$, if for every $\epsilon > 0$, there exists a $\delta > 0$ such that every $\delta$-pseudo orbit for $f$ through $x$ can be $\epsilon$-traced through $f$ by some point of $X$ \cite{MS}. The set of all shadowable points of $f$ is denoted by $Sh(f)$.
\medskip

For a metric space $(X, d^{X})$ and $A,B\subset X$, we define:
\begin{align*}
d^{X}(A, B) = \inf \lbrace d^{X}(a, b) : (a, b)\in A\times B\rbrace
\end{align*}

We replace $A$ by $``a"$, whenever $A = \lbrace a\rbrace$. 
\medskip

The Hausdorff distance between $A$ and $B$ is given by: 
\begin{align*}
d_{H}^{X}(A, B) = \max \lbrace \sup_{a\in A} d^{X}(a, B), \sup_{b\in B}d^{X}(A, b)\rbrace
\end{align*}

An isometry between $(X, d^{X})$ and $(Y, d^{Y})$ is an onto map $i : (X, d^{X}) \rightarrow (Y, d^{Y})$ satisfying 
$d^{X}(x, x') = d^{Y} (i(x), i(x')) $, for every pair $x, x'\in X$. 
For a given $\delta > 0$, the map $i : X\rightarrow Y$ between $(X, d^{X})$ and $(Y, d^{Y})$ is said to be a $\delta$-isometry, if 
\begin{align*}
\max \lbrace d_{H}^{Y}(i(X), Y), \sup_{x, x'\in X}|d^{Y}(i(x), i(x')) - d^{X}(x, x')|\rbrace < \delta
\end{align*}

The $C^{0}$-distance between maps $f : (X, d^{X})\rightarrow (Y, d^{Y})$ and $\overline{f}: (X, d^{X})\rightarrow (Y, d^{Y})$ is given by:
\begin{align*}
d_{C^{0}}^{Y}(f, \overline{f}) = \sup_{x\in X}d^{Y}(f(x), \overline{f}(x))
\end{align*}

The $C^{0}$-Gromov-Hausdorff distance between maps $h: (X, d^{X})\rightarrow (X, d^{X})$ and $g: (Y, d^{Y})\rightarrow (Y, d^{Y})$ is given by:
\begin{center}
$d_{GH^{0}}(f, g) = \inf \lbrace \delta > 0:$ there exist $\delta$-isometries $i : X\rightarrow Y$ and $j : Y\rightarrow X$ such that $d_{C^{0}}^{Y}(g\circ i, i\circ h) < \delta$ and $d_{C^{0}}^{X}(j\circ g, h\circ j)  < \delta \rbrace$
\end{center}
\medskip

Let $f$ be a homeomorphism on $(X, d^{X})$ and $x\in X$. Then,
\begin{enumerate}
\item[(i)] $f$ is said to be topologically stable, if for every $\epsilon > 0$, there exists a $\delta > 0$ such that for every homeomorphism $g$ on $X$ satisfying $d_{C^{0}}(f, g) < \delta$, there exists a continuous map $h : X\rightarrow X$ such that $f\circ h = h\circ g$ and $d(h(x), x) < \epsilon$, for each $x\in X$ \cite{WO}.
\item[(ii)] $f$ is said to be GH-stable, if for every $\epsilon > 0$, there exists a $\delta > 0$ such that for every homeomorphism $g$ on $(Y, d^{Y})$ satisfying $d_{GH^{0}}(f, g) < \delta$, there exists a continuous $\epsilon$-isometry $h : Y\rightarrow X$ such that $f\circ h = h\circ g$ \cite{ART}.
\item[(iii)] $x$ is said to be a topologically stable point of $f$, if for every $\epsilon > 0$, there exists a $\delta > 0$ such that for every homeomorphism $g$ on $X$ satisfying $d_{C^{0}}(f, g)\leq \delta$, there exists a continuous map $h : \overline{\mathcal{O}_{g}(x)} \rightarrow X$ such that $f\circ h = h\circ g$ and $d(h(z), z) \leq \epsilon$, for each $z\in \overline{\mathcal{O}_{g}(x)}$ \cite{KLMP}. 
\end{enumerate}
\medskip

Now, we define minimally expansive points which plays an important role in guaranteeing the stability of a system at a point.
\begin{Definition}
Let $f$ be a homeomorphism on $X$. Then, a point $x\in X$ is said to be a minimally expansive point of $f$, if there exists a $\mathfrak{c} > 0$ such that for each $y\in B(x, \mathfrak{c})$, $f$ is expansive on $\overline{\mathcal{O}_{f}(y)}$ with expansivity constant $\mathfrak{c}$. Such a constant $\mathfrak{c}$ is said to be an expansivity constant for minimal expansivity of $f$ at $x$. The set of all minimally expansive points of $f$ is denoted by $M_{f}(X)$. 
\phantomsection\label{D3.1}
\end{Definition}

In \cite{RP}, Reddy did not point out any result which holds explicitly for an expansive homeomorphism but does not hold for a pointwise expansive homeomorphism. Since, a homeomorphism $f$ on $X$ is expansive if and only if $U_{f}(X) = X$ \cite[Proposition 6]{ARP}, nothing additional can be said for a homeomorphism $f$ with $U_{f}(X) = X$. On the contrary, from the following discussion we get that there are results which holds for every expansive homeomorphism but not for a homeomorphism $f$ satisfying $M_{f}(X) = X$. 
Clearly, if $f$ is expansive, then $M_{f}(X) = X$, but the converse need not be true. Infact, if $R_{\mathbb{S}^{1}}$ denotes the rational rotation on a unit circle equipped with the usual metric, then $R_{\mathbb{S}^{1}}$ is not expansive but $M_{R_{\mathbb{S}^{1}}}(\mathbb{S}^{1}) = \mathbb{S}^{1}$. Therefore, circle admits a homeomorphism under which every point is a minimally expansive point, but there does not exists any pointwise expansive homeomorphism of an circle \cite[Corollary]{RP}. Therefore, minimally expansive point need not be an expansive (uniformly expansive) point. This also implies that, there exists a homeomorphism $f$ with uncountable periodic points and satisfies $M_{f}(X) = X$, but \cite[Lemma]{RP} guarantees that, if $f$ is a pointwise expansive homeomorphism on a compact metric space, then the set of all periodic points of $f$ is countable. 
\medskip

We remark here that, if $f$ is a transitive homeomorphism on $X$, then either $M_{f}(X) = \phi$ or $f$ is expansive. Since, either a transitive homeomorphism have shadowing or have no shadowable points \cite[Theorem 1.2]{KQ} and every expansive homeomorphism with the shadowing property on a compact metric space is topologically stable \cite[Theorem 4]{WO} and GH-stable \cite[Theorem 4]{ART}, we can conclude that every transitive homeomorphism having atleast one minimally expansive point and atleast one shadowable point, is topologically stable and GH-stable.

\begin{Proposition}
Let $f$ and $g$ be homeomorphisms on $X$ and $Y$ respectively. If $h : X \rightarrow Y$ is a homeomorphism, then the following statements are true:
\begin{enumerate}
\item[(1)] $U_{f}(X) = U_{f^{-1}}(X)$.
\item[(2)] $U_{h\circ f \circ h^{-1}}(Y) = h(U_{f}(X))$.
\item[(3)] $U_{f^{k}}(X)$ is an invariant set of $f$, for each $k\in \mathbb{Z}$. In particular, the set of all uniformly expansive points is invariant under $f$.
\item[(4)] $U_{f\times g}(X\times Y) = U_{f}(X)\times U_{g}(Y)$.
\item[(5)] $M_{f}(X) = M_{f^{-1}}(X)$.
\item[(6)] $M_{h\circ f \circ h^{-1}}(Y) = h(M_{f}(X))$.
\item[(7)] $M_{f^{k}}(X)$ is an invariant set of $f$, for each $k\in \mathbb{Z}$. In particular, the set of all minimally expansive points is invariant under $f$.
\end{enumerate}
\label{T3.3}
\end{Proposition}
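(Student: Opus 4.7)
The plan is to prove all seven items by elementary manipulations that exploit the compactness of $X$ and $Y$, which forces $h$, $h^{-1}$, $f$, $g$, and all their finite iterates to be uniformly continuous. I will organize the work into three blocks according to the structural transformation being applied, and treat the parallel statements for $U$ and $M$ in tandem.

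Items (1) and (5) are immediate: the defining condition quantifies over all $n\in\mathbb{Z}$, and $E_y(f,x,\epsilon)=-E_y(f^{-1},x,\epsilon)$, so any expansivity constant that works for $f$ at $x$ works unchanged for $f^{-1}$ at $x$. For (5) one additionally uses $\mathcal{O}_f(y)=\mathcal{O}_{f^{-1}}(y)$, so the orbit closures appearing in the minimally expansive condition are the same for $f$ and $f^{-1}$.

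For the conjugation statements (2) and (6), the key input is uniform continuity of both $h$ and $h^{-1}$. Given a constant $\mathfrak{c}>0$ associated to $x$ for $f$, I choose $\mathfrak{c}'<\mathfrak{c}$ small enough that $h^{-1}$ sends $B(h(x),\mathfrak{c}')$ into $B(x,\mathfrak{c})$ and that any pair of points in $X$ separated by more than $\mathfrak{c}$ is sent by $h$ to a pair separated by more than $\mathfrak{c}'$. This yields an expansivity constant at $h(x)$ for $h\circ f\circ h^{-1}$; the reverse inclusion is the same argument applied to $h^{-1}$. For (6) the identity $h(\overline{\mathcal{O}_f(y)})=\overline{\mathcal{O}_{h\circ f\circ h^{-1}}(h(y))}$, valid because $h$ is a homeomorphism that intertwines the two dynamics, lets me transport the orbit-closure expansivity condition in the same way.

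For items (3) and (7) I establish both $f(U_{f^k}(X))\subset U_{f^k}(X)$ and $f^{-1}(U_{f^k}(X))\subset U_{f^k}(X)$ by the following transfer: to work at $f(x)$ I pull a small ball around $f(x)$ back through $f^{-1}$ into a ball $B(x,\mathfrak{c})$, apply the expansivity of $f^k$ at $x$, and push the resulting separation forward through $f$. For (7) this rests on the identity $\overline{\mathcal{O}_{f^k}(f^{-1}(w))}=f^{-1}(\overline{\mathcal{O}_{f^k}(w)})$, which holds because $f$ commutes with $f^k$ and is a homeomorphism. Finally, item (4) is handled by equipping $X\times Y$ with the max metric: testing pairs that differ in only one coordinate gives $U_{f\times g}(X\times Y)\subset U_f(X)\times U_g(Y)$, and taking $\min(\mathfrak{c}_X,\mathfrak{c}_Y)$ as the common constant handles the reverse inclusion, since two distinct product points must differ in at least one coordinate.

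I expect the main obstacle to be item (7). In the definition of a minimally expansive point, the radius of the ball around $x$ and the expansivity constant on each orbit closure $\overline{\mathcal{O}_f(y)}$ are literally the same number $\mathfrak{c}$. When I push a separation bound through $f$ via uniform continuity I inevitably obtain a smaller bound, which then forces the candidate ball radius to shrink; the argument only closes once I verify that a single sufficiently small constant can simultaneously play both roles after the transfer.
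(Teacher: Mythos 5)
Your proposal is correct and follows essentially the paper's route: (1), (4), (5) directly from the definitions (with the max metric on $X\times Y$ for (4)), and (2), (6) by transferring the constant through the uniform continuity of $h$ and $h^{-1}$ together with the identity $h(\overline{\mathcal{O}_{f}(y)})=\overline{\mathcal{O}_{h\circ f\circ h^{-1}}(h(y))}$. The only difference is cosmetic: the paper gets (3) and (7) at once by applying (2) and (6) with $h=f$ (since $f\circ f^{k}\circ f^{-1}=f^{k}$), whereas you rerun the same transfer argument directly at $f(x)$; the issue you flag in (7) about one constant playing both roles is resolved exactly as you indicate, since the $\mathfrak{d}$ furnished by uniform continuity of $f^{-1}$ serves simultaneously as the ball radius at $f(x)$ and (after a harmless shrinking to handle strict versus non-strict inequalities) as the separation constant on the orbit closures.
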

\begin{proof}
Suppose that $X\times Y$ is equipped with the maximum metric $D$ defined as $D((x_{1}, y_{1}), (x_{2}, y_{2})) = \max \lbrace d^{X}(x_{1},x_{2}), d^{Y}(y_{1}, y_{2})\rbrace\text{, for all }(x_{1}, y_{1}), (x_{2}, y_{2})\in X\times Y$. Proofs of (1), (4) and (5) follows from corresponding definitions, (3) follows from (2) and (7) follows from (6). Since proofs of (2) and (6) are similar, we only prove (6). 

\begin{enumerate}
\item[(6)] Let $h: X\rightarrow Y$ be a homeomorphism, $y\in h(M_{f}(X))$ and let $z\in M_{f}(X)$ with expansivity constant $\mathfrak{c}$ such that $y = h(z)$. Choose a $\mathfrak{d} > 0$ such that $d^{Y}(y_{1}, y_{2}) < \mathfrak{d}$ implies $d^{X}(h^{-1}(y_{1}), h^{-1}(y_{2}))$ $< \mathfrak{c}$, for all $y_{1}, y_{2}\in Y$. We claim that, $y\in M_{h\circ f\circ h^{-1}}(Y)$ with expansivity constant $\mathfrak{d}$. 
Now, fix $w\in B(y, \mathfrak{d})$ and choose a pair of distinct points $u, v\in \overline{\mathcal{O}_{h\circ f\circ h^{-1}}(w)}$. Set $w' = h^{-1}(w)\in B(z, \mathfrak{c})$. Choose sequence of integers $\lbrace n_{i}^{u}\rbrace_{i\in \mathbb{Z}}$, $\lbrace n_{i}^{v}\rbrace_{i\in \mathbb{Z}}$ satisfying $h\circ f^{n_{i}^{u}}\circ h^{-1}(w)\rightarrow u$ and $h\circ f^{n_{i}^{v}}\circ h^{-1}(w)\rightarrow v$. 
Therefore, $f^{n_{i}^{u}}\circ h^{-1}(w)\rightarrow h^{-1}(u)$ and $f^{n_{i}^{u}}\circ h^{-1}(w)\rightarrow h^{-1}(u)$ implying $h^{-1}(u), h^{-1}(v) \in \overline{\mathcal{O}_{f}(w')}$. Therefore, $d^{Y}(h\circ f^{n}\circ h^{-1}(u), h\circ f^{n}\circ h^{-1}(v)) < \mathfrak{d}$, for each $n\in \mathbb{Z}$ implying $d^{X}(f^{n}\circ h^{-1}(u), f^{n}\circ h^{-1}(v)) < \mathfrak{c}$, for each $n\in \mathbb{Z}$ which contradicts the expansivity of $f$ on $\overline{\mathcal{O}_{f}(w')}$. Since $w$ is chosen arbitrarily, we get that $y\in M_{h\circ f \circ h^{-1}}(Y)$ with expansivity constant $\mathfrak{d}$. Hence, $h(M_{f}(X))\subset M_{h\circ f\circ h^{-1}}(Y)$. Replace $h$ by $h^{-1}$ and $f$ by $h\circ f\circ h^{-1}$ in the last inclusion to complete the proof. 
\end{enumerate}
\end{proof}

Recall that, a sequence of homeomorphisms $\lbrace f_{n}\rbrace_{n\in \mathbb{N}}$ on $X$ is said to be uniformly convergent to a homeomorphism $f$ on $X$ or $f_{n}\xrightarrow[]{\text{uc}} f$, if for every $\epsilon > 0$, there exists an $N \in \mathbb{N}$ such that $d(f_{n}(x), f(x)) < \epsilon$, for all $n\geq N$ and for each $x\in X$. 
\medskip

\begin{Lemma}
Suppose that a sequence of homeomorphisms $\lbrace f_{n}\rbrace_{n\in \mathbb{N}}$ on $X$ is uniformly convergent to a homeomorphism $f$ on $X$. Then, for each triplet $\epsilon > 0$, $k\in \mathbb{N}$ and $x\in X$, there exists an $N = N(\epsilon, k, x) \in \mathbb{N}$ such that $d(f_{n}^{k}(x), f^{k}(x)) < \epsilon$, for all $n\geq N$.
\label{L3.4}
\end{Lemma}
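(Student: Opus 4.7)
The plan is to proceed by induction on $k\in \mathbb{N}$. The base case $k=1$ is immediate: given $\epsilon > 0$ and $x\in X$, the uniform convergence $f_{n}\xrightarrow{\mathrm{uc}} f$ supplies an $N$ such that $d(f_{n}(x),f(x))<\epsilon$ for all $n\geq N$ (the pointwise statement at $x$ suffices). Observe that the lemma only asks for $N$ depending on $x$, which simplifies matters since we do not need any uniformity in the point.

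For the inductive step, assume the claim holds at level $k$. Fix $\epsilon > 0$ and $x\in X$. The key is the triangle-inequality decomposition
\begin{align*}
d(f_{n}^{k+1}(x), f^{k+1}(x))
&= d(f_{n}(f_{n}^{k}(x)), f(f^{k}(x))) \\
&\leq d(f_{n}(f_{n}^{k}(x)), f(f_{n}^{k}(x))) + d(f(f_{n}^{k}(x)), f(f^{k}(x))).
\end{align*}
The first summand is bounded by the $C^{0}$-distance $d_{C^{0}}(f_{n},f)$, which is smaller than $\epsilon/2$ for all $n$ exceeding some $N_{1}$, by uniform convergence. For the second summand, invoke the continuity of $f$ at the point $f^{k}(x)$ to obtain $\delta > 0$ such that $d(z,f^{k}(x))<\delta$ forces $d(f(z),f(f^{k}(x)))<\epsilon/2$; then the inductive hypothesis applied to $(\delta, k, x)$ yields $N_{2}$ with $d(f_{n}^{k}(x), f^{k}(x))<\delta$ for all $n\geq N_{2}$. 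Setting $N = \max\{N_{1},N_{2}\}$ closes the induction.

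Really, there is no serious obstacle here; the only point requiring care is the choice of split. One must route the argument through $f$ (applying uniform convergence to the outer map and continuity to the outer map as well) rather than through $f_{n}$, because continuity of $f_{n}$ would demand a modulus of continuity uniform in $n$, which is not part of the hypothesis. With the split above, each estimate uses only what is given: the uniform convergence $f_{n}\xrightarrow{\mathrm{uc}} f$ and the (automatic, on the compact space $X$) continuity of the limit $f$.
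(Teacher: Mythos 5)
Your proof is correct, but it takes a different route from the paper: the paper does not argue at all, it simply cites \cite[Lemma 3.1]{AAU}, which asserts the stronger, uniform-in-$x$ statement that for each $\epsilon>0$ and each $l\in\mathbb{N}$ there is $N=N(\epsilon,l)$ with $d(f_{n}^{l}(x),f^{l}(x))<\epsilon$ for all $n\geq N$ and \emph{all} $x\in X$, and then observes the lemma follows. You instead give a self-contained induction on $k$, and your decomposition is the right one: bounding $d(f_{n}(f_{n}^{k}(x)),f(f_{n}^{k}(x)))$ by $d_{C^{0}}(f_{n},f)$ correctly uses full uniform convergence (the argument $f_{n}^{k}(x)$ moves with $n$, so pointwise convergence would not do), and routing the second summand through continuity of the limit $f$ avoids the equicontinuity of $\{f_{n}\}$ that the other split would require — exactly the caveat you flag. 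What the citation buys the paper is brevity and the stronger conclusion (an $N$ independent of $x$), which is what one would want if uniformity in the point were ever needed; what your argument buys is transparency and independence from the reference, and it upgrades to the uniform version with one change: replace continuity of $f$ at $f^{k}(x)$ by uniform continuity of $f$ on the compact space $X$, so that $\delta$ no longer depends on the point and the inductive $N$ can be taken independent of $x$. As written, your proof establishes precisely the statement of Lemma \ref{L3.4} (where $N$ may depend on $x$), so there is no gap.
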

\begin{proof}
Recall that, for a given $\epsilon > 0$ and for each positive integer $l$, there exists an $N = N(\epsilon, k) \in \mathbb{N}$ such that $d(f_{n}^{l}(x), f^{l}(x)) < \epsilon$, for all $n\geq N$ and for each $x\in X$ \cite[Lemma 3.1]{AAU}. Now, proof follows immediately.
\end{proof}

\begin{theorem} 
Let $\lbrace f_{n}\rbrace_{n\in \mathbb{N}}$ be a sequence of homeomorphisms on $X$ and let $f$ be a homeomorphism on $X$ such that $f_{n}\xrightarrow{uc} f$ and $f_{n}^{-1}\xrightarrow{uc} f^{-1}$. Then, the following statements are true:
\begin{enumerate}
\item[(1)] $x\in U_{f}(X)$ if and only if there exists a $\delta > 0$ such that for every pair of distinct points $y, z\in B(x, \delta)$, $\cup_{m\geq 1} \cap_{n\geq m} E_{z}(f_{n}, y, \delta)\neq \phi$.
\item[(2)] $x\in M_{f}(X)$ if and only if there exists a $ \delta >0 $ such that for each $y\in B(x, \delta)$, if for every pair of distinct points $u, v\in \overline{\mathcal{O}_{f}(y)}$, $\cup_{m\geq 1} \cap_{n\geq m} E_{v}(f_{n}, u, \delta)\neq \phi$.
\end{enumerate}
\label{T3.5}
\end{theorem}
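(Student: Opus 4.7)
The plan is to prove both parts in parallel, since they share a common template: use Lemma \ref{L3.4} together with the hypothesis $f_n^{-1}\xrightarrow{uc}f^{-1}$ (which extends the lemma to negative exponents via $f^k=(f^{-1})^{|k|}$) as the bridge between separation under $f$ and eventual separation under the approximants $f_n$. In the forward direction one loses a factor when spreading points from $f^k$ to $f_n^k$, which is absorbed by taking the witnessing $\delta$ strictly smaller than the given expansivity constant; in the reverse direction a strict inequality for $f$ is recovered from the limit only after shrinking the radius slightly.

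For the forward direction of (1), suppose $x\in U_f(X)$ with expansivity constant $\mathfrak{c}$, and set $\delta=\mathfrak{c}/3$. Given distinct $y,z\in B(x,\delta)\subset B(x,\mathfrak{c})$, pick $k\in\mathbb{Z}$ with $d(f^k(y),f^k(z))>\mathfrak{c}=3\delta$. Applying Lemma \ref{L3.4} at the points $y$ and $z$ with exponent $k$ (using $f_n^{-1}\xrightarrow{uc}f^{-1}$ when $k<0$) produces $N\in\mathbb{N}$ with $d(f_n^k(y),f^k(y))<\delta$ and $d(f_n^k(z),f^k(z))<\delta$ for every $n\geq N$. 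The triangle inequality then yields $d(f_n^k(y),f_n^k(z))>\delta$ for all $n\geq N$, so $k\in\bigcap_{n\geq N}E_z(f_n,y,\delta)$, and the union over $m$ is nonempty. Conversely, given a $\delta$ satisfying the right-hand side, set $\mathfrak{c}=\delta/2$. For distinct $y,z\in B(x,\mathfrak{c})\subset B(x,\delta)$, pick any $k$ and $m$ with $d(f_n^k(y),f_n^k(z))>\delta$ for every $n\geq m$; Lemma \ref{L3.4} together with the continuity of $d$ gives $d(f^k(y),f^k(z))\geq\delta>\mathfrak{c}$, so $x\in U_f(X)$ with expansivity constant $\mathfrak{c}$.

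Part (2) is proved along exactly the same lines, with the only change being that the pair of distinct points is drawn from $\overline{\mathcal{O}_f(y)}$ for each $y\in B(x,\delta)$ (respectively $B(x,\mathfrak{c})$ in the backward direction) rather than from a single ball. Because the orbit closure is taken with respect to the fixed map $f$, it is independent of $n$, so the pointwise-in-$x$ form of Lemma \ref{L3.4} is enough, and the same scaling choices $\delta=\mathfrak{c}/3$ and $\mathfrak{c}=\delta/2$ work verbatim.

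The only real obstacle is the mismatch between the strict ``$>$'' in the definition of expansivity and the weak ``$\geq$'' produced by letting $n\to\infty$ in the reverse direction; the factor-of-two slack in the expansivity constant, and the analogous factor-of-three slack in the forward direction, resolve this cleanly, so the argument is otherwise a direct unwinding of the definitions.
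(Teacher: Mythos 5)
Your proposal is correct and follows essentially the same route as the paper: both directions rest on Lemma \ref{L3.4} (extended to negative exponents via $f_n^{-1}\xrightarrow{uc}f^{-1}$), with the forward implication using $\delta=\mathfrak{c}/3$ exactly as in the paper's proof. The only cosmetic difference is in the converse, where you pass to the limit $n\to\infty$ and absorb the resulting weak inequality with the constant $\delta/2$, while the paper works with a single sufficiently large $n$ and a triangle inequality to get the strict bound $\delta/3$; both are sound.
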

\begin{proof}
Let $\lbrace f_{n}\rbrace_{n\in \mathbb{N}}$ be a sequence of homeomorphisms on $X$ and let $f$ be a homeomorphism on $X$ such that $f_{n}\xrightarrow{uc} f$ and $f_{n}^{-1}\xrightarrow{uc} f^{-1}$. Since proofs of (1) and (2) are similar, we prove only (1).
\begin{enumerate}
\item[(1)] Let $x\in U_{f}(X)$ with expansivity constant $ \mathfrak{c}$ and choose distinct points $y, z\in B(x, \frac{\mathfrak{c}}{3})$. Fix $k(y,z) = k \in \mathbb{Z}$ such that $d(f^{k}(y), f^{k}(z)) > \mathfrak{c}$. From Lemma \ref{L3.4}, choose $N\in \mathbb{N}$ such that $d(f_{n}^{k}(u), f^{k}(u)) < \frac{\mathfrak{c}}{3}$, for all $n\geq N$ and for each $u\in \lbrace y, z\rbrace$. 
Note that, $d(f_{n}^{k}(y), f_{n}^{k}(z)) > \frac{\mathfrak{c}}{3}$, for all $n\geq N$ implying $k\in \cap_{n\geq N} E_{x}(f_{n}, y, \frac{\mathfrak{c}}{3})$ i.e. $\cup_{m\geq 1} \cap_{n\geq m} E_{z}(f_{n}, y, \frac{\mathfrak{c}}{3})\neq \phi$. Since $y,z$ are chosen arbitrarily, we have $\cup_{m\geq 1} \cap_{n\geq m} E_{z}(f_{n}, y, \frac{\mathfrak{c}}{3})\neq \phi$, for every pair of distinct points $y, z\in B(x, \frac{\mathfrak{c}}{3})$. 
Conversely, choose a $ \delta > 0$ such that  $\cup_{m\geq 1} \cap_{n\geq m} E_{z}(f_{n},y , \delta)\neq \phi$, for every pair of distinct points $u, v\in B(x, \delta)$. For a pair of distinct points $y,z\in B(x, \frac{\delta}{3})$, choose  $M(y, z) = M\in \mathbb{N}$ such that $\cap_{n\geq M} E_{z}(f_{n}, y, \delta)$ $\neq \phi$. Fix $p\in \cap_{n\geq M} E_{z}(f_{n}, y, \delta)$. From Lemma \ref{L3.4}, choose $N\geq M$ such that $d(f_{n}^{p}(u), f^{p}(u)) < \frac{\delta}{3}$, for all $n\geq N$ and for each $u\in \lbrace y, z\rbrace$. Therefore, we have $d(f^{p}(y), f^{p}(z)) > \frac{\delta}{3}$. Since $\delta$ does not depends on $y$ and $z$, we get that $x\in U_{f}(X)$ with expansivity constant $\frac{\delta}{3}$.
\end{enumerate}
\end{proof}

\begin{Definition}
Let $f$ be a homeomorphism on $(X, d^{X})$. Then, a point $x\in X$ is said to be a GH-stable point of $f$, if for every $\epsilon > 0$, there exists a $\delta > 0$ such that for every homeomorphism $g$ on $(Y, d^{Y})$ satisfying $d_{GH^{0}}(f, g) < \delta$, there exists an $\epsilon$-isometry $j: Y\rightarrow X$ such that for each $y\in j^{-1}(x)$ there exists a continuous map $h : \overline{\mathcal{O}_{g}(y)} \rightarrow X$ satisfying $d^{X}(h(z), j(z)) < \epsilon$, for each $z\in \overline{\mathcal{O}_{g}(y)}$ and $f\circ h = h\circ g$. The set of all GH-stable points of $f$ is denoted by $GH_{f}(X)$.
\label{D5.1}
\end{Definition}

Clearly, if $f$ is GH-stable, then $GH_{f}(X) = X$. To prove the stability theorem in pointwise setting, we need the following Lemma.

\begin{Lemma} \label{L5.4} 
Let $f$ be a homeomorphism on $X$ and let $x\in X$ be a minimally expansive point of $f$ with expansivity constant $\mathfrak{c}$. Then, for each $y\in B(x, \mathfrak{c})$ and for each $0 < \epsilon < \mathfrak{c}$, there exists an $N(y, \epsilon)  = N \in \mathbb{N}$ such that for every pair $u, v \in \mathcal{O}_{f}(y)$ satisfying $d(f^{n}(u), f^{n}(v)) < \mathfrak{c}$, for all $-N\leq n\leq N$, we have $d(u, v) < \epsilon$. 
\end{Lemma}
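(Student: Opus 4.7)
The plan is a proof by contradiction, exploiting compactness of $X$ together with the fact that $\overline{\mathcal{O}_f(y)}$ is closed and $f$-invariant, and that expansivity on this closure is provided by the hypothesis that $y \in B(x, \mathfrak{c})$ where $x$ is minimally expansive with constant $\mathfrak{c}$.

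Fix $y \in B(x, \mathfrak{c})$ and $0 < \epsilon < \mathfrak{c}$, and suppose no such $N$ exists. Then for every $k \in \mathbb{N}$ there are points $u_k, v_k \in \mathcal{O}_f(y)$ with
\begin{equation*}
d(f^n(u_k), f^n(v_k)) < \mathfrak{c} \quad \text{for all } -k \le n \le k,
\end{equation*}
yet $d(u_k, v_k) \ge \epsilon$. By compactness of $X$, pass to a subsequence so that $u_k \to u^{*}$ and $v_k \to v^{*}$. Since $\mathcal{O}_f(y) \subset \overline{\mathcal{O}_f(y)}$ and the latter is closed, both limits lie in $\overline{\mathcal{O}_f(y)}$, and $d(u^{*}, v^{*}) \ge \epsilon > 0$ guarantees $u^{*} \ne v^{*}$.

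For each fixed $n \in \mathbb{Z}$, continuity of $f^n$ gives $f^n(u_k) \to f^n(u^{*})$ and $f^n(v_k) \to f^n(v^{*})$. For $k \ge |n|$ the displayed inequality applies, so passing to the limit yields
\begin{equation*}
d(f^n(u^{*}), f^n(v^{*})) \le \mathfrak{c} \quad \text{for every } n\in \mathbb{Z}.
\end{equation*}
On the other hand, minimal expansivity of $f$ at $x$ with constant $\mathfrak{c}$, together with $y \in B(x, \mathfrak{c})$, ensures that $f$ is expansive on $\overline{\mathcal{O}_f(y)}$ with expansivity constant $\mathfrak{c}$. Since $u^{*}, v^{*}$ are distinct points of $\overline{\mathcal{O}_f(y)}$, there must exist some $m \in \mathbb{Z}$ with $d(f^m(u^{*}), f^m(v^{*})) > \mathfrak{c}$, contradicting the previous display.

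The argument is essentially the standard compactness/expansivity finite-window lemma adapted to the orbit-closure setting; the only point requiring slight care is that the hypothesis yields strict inequality $<\mathfrak{c}$ (which survives the limit as $\le \mathfrak{c}$), while the conclusion of expansivity gives strict inequality $>\mathfrak{c}$, so the contradiction is clean. The main thing to verify is that the limit points $u^{*}, v^{*}$ indeed lie in the orbit closure of $y$, which is immediate from $\mathcal{O}_f(y) \subset \overline{\mathcal{O}_f(y)}$ and the closedness of the latter—no separate compactness argument inside $\overline{\mathcal{O}_f(y)}$ is needed.
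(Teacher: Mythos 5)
Your proof is correct and follows essentially the same route as the paper's: contradiction, compactness of $X$ to extract convergent subsequences with distinct limits in the closed set $\overline{\mathcal{O}_f(y)}$, passing the inequality to the limit to get $d(f^n(u^*),f^n(v^*))\le \mathfrak{c}$ for all $n$, and contradicting expansivity of $f$ on $\overline{\mathcal{O}_f(y)}$ guaranteed by minimal expansivity at $x$ with $y\in B(x,\mathfrak{c})$. No issues.
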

\begin{proof}
Assume the contrary. Choose $0 < \epsilon < \mathfrak{c}$ such that for each $N\in \mathbb{N}$, there exists a pair $u_{N}, v_{N}\in \mathcal{O}_{f}(y)$ satisfying $d(f^{n}(u_{N}), f^{n}(v_{N})) < \mathfrak{c}$, for all $-N\leq n\leq N$ and $d(u_{N}, v_{N}) \geq \epsilon$. Since $X$ is compact, we can choose $u, v\in \overline{\mathcal{O}_{f}(y)}$ such that $u_{N}\rightarrow u$ and $v_{N}\rightarrow v$. Therefore, $u\neq v$ and $d(f^{n}(u), f^{n}(v)) \leq \mathfrak{c}$, for each $n\in \mathbb{Z}$ which contradicts the expansivity of $f$ on $\overline{\mathcal{O}_{f}(y)}$.
\end{proof}

\begin{theorem}
Let $f$ be a homeomorphism on $X$. If $x\in X$ is a minimally expansive shadowable point of $f$, then $x$ is a topologically stable and a GH-stable point of $f$.
\label{T5.7}
\end{theorem}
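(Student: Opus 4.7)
The plan is to mirror the classical Walters stability argument in the pointwise setting, using minimal expansivity of $f$ at $x$ in place of global expansivity and Lemma~\ref{L5.4} as the uniform expansivity substitute. Let $\mathfrak{c}$ be an expansivity constant for minimal expansivity of $f$ at $x$; I would assume from the outset that $\epsilon < \mathfrak{c}/3$.

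\textbf{Topological stability.} Using shadowability at $x$, I would choose $\delta > 0$ such that every $\delta$-pseudo orbit for $f$ through $x$ is $(\epsilon/3)$-traced. For a homeomorphism $g$ on $X$ with $d_{C^{0}}(f,g) \leq \delta$, the $g$-orbit $\{g^{n}(x)\}$ is such a pseudo orbit, so some $y \in X$ traces it; since $d(y, x) \leq \epsilon/3$ one has $y \in B(x, \mathfrak{c})$. I would then set $h(g^{n}(x)) := f^{n}(y)$. The delicate point is well-definedness: if $g^{m}(x) = g^{n}(x)$ with $m > n$, then $g^{m-n}(x) = x$ and $f^{m-n}(y)$ also $(\epsilon/3)$-traces $\{g^{k}(x)\}$, so $d(f^{k}(y), f^{k}(f^{m-n}(y))) < 2\epsilon/3 < \mathfrak{c}$ for all $k$; since $y, f^{m-n}(y) \in \overline{\mathcal{O}_{f}(y)}$, minimal expansivity forces $f^{m-n}(y) = y$. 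Equivariance $f \circ h = h \circ g$ and the estimate $d(h(z), z) \leq \epsilon/3$ on $\mathcal{O}_{g}(x)$ are immediate. For the continuous extension to $\overline{\mathcal{O}_{g}(x)}$ I would apply Lemma~\ref{L5.4} to the tracing point $y$ to obtain $N$ controlling $\epsilon$-expansivity within $\mathcal{O}_{f}(y)$, and use uniform continuity of $g$ on $X$ to find $\gamma > 0$ with $d(z_{1}, z_{2}) < \gamma \Rightarrow d(g^{n}(z_{1}), g^{n}(z_{2})) < \mathfrak{c}/3$ on $|n| \leq N$. A triangle inequality combining this bound with the $(\epsilon/3)$-tracing estimate keeps $d(f^{n}(h(z_{1})), f^{n}(h(z_{2}))) < \mathfrak{c}$ on $|n| \leq N$, and Lemma~\ref{L5.4} then supplies $d(h(z_{1}), h(z_{2})) < \epsilon$. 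Hence $h$ extends continuously with the equivariance and the $\epsilon$-bound preserved by limits.

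\textbf{GH-stability.} I would rerun the same construction substituting the $\delta$-isometry $j : Y \to X$ from $d_{GH^{0}}(f,g) < \delta$ for the $C^{0}$-closeness, with $\delta$ also shrunk so that $j$ is an $\epsilon$-isometry. For $y \in j^{-1}(x)$, the sequence $\{j(g^{n}(y))\}$ is a $\delta$-pseudo orbit for $f$ through $x$ because $j(y) = x$ and $d_{C^{0}}^{X}(f \circ j, j \circ g) < \delta$; shadowability produces a $(\epsilon/3)$-tracing point $z \in X$ and $h(g^{n}(y)) := f^{n}(z)$ is handled exactly as above for well-definedness, equivariance, and the estimate $d^{X}(h(g^{n}(y)), j(g^{n}(y))) < \epsilon/3$. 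The continuity step goes through with the additional input $|d^{X}(j(w_{1}), j(w_{2})) - d^{Y}(w_{1}, w_{2})| < \delta$, used together with uniform continuity of $g$ on $Y$ to control the relevant distances in $X$ before invoking Lemma~\ref{L5.4}. For the required bound $d^{X}(h(z), j(z)) < \epsilon$ at limit points $z \in \overline{\mathcal{O}_{g}(y)} \setminus \mathcal{O}_{g}(y)$ one must be slightly careful because $j$ need not be continuous, but the same $\delta$-isometry inequality bounds $d^{X}(j(z_{k}), j(z))$ up to an additive $\delta$, so shrinking $\delta$ (say $\delta \leq \epsilon/4$) at the outset keeps the total bound strictly below $\epsilon$.

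The principal obstacle throughout is the well-definedness step. Minimal expansivity asserts expansivity of $f$ only on individual orbit-closures $\overline{\mathcal{O}_{f}(w)}$ for $w \in B(x, \mathfrak{c})$, not on arbitrary pairs of nearby points, so both candidate images of a repeated point must be placed inside the single orbit-closure $\overline{\mathcal{O}_{f}(y)}$. This is exactly what the construction arranges, since every iterate $f^{k}(y)$ automatically lies in $\mathcal{O}_{f}(y)$, and $\overline{\mathcal{O}_{f}(y)}$ is precisely the set on which Lemma~\ref{L5.4} and the minimal expansivity at $x$ can be jointly exploited to drive both well-definedness and the uniform continuity needed for the continuous extension.
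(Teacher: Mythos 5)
Your proposal is correct and follows essentially the same route as the paper's proof: shadow the pulled-back pseudo-orbit $\{g^{n}(x)\}$ (resp.\ $\{j(g^{n}(y))\}$), define $h(g^{n}(\cdot)) = f^{n}(\text{tracing point})$, use minimal expansivity on $\overline{\mathcal{O}_{f}(\text{tracing point})}$ for well-definedness, and combine Lemma~\ref{L5.4} with uniform continuity of $g$ and the near-isometry bound on $j$ to get uniform continuity and the extension to $\overline{\mathcal{O}_{g}(\cdot)}$. The only differences are cosmetic choices of constants ($\epsilon/3$ versus $\min\{\epsilon,\mathfrak{c}\}/16$) and that you write out the topological-stability case the paper leaves as ``similar.''
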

\begin{proof}
We only prove the latter case. Proof of first case can be done on similar lines. 
Let $x\in X$ be a minimally expansive shadowable point of $f$ with expansivity constant $\mathfrak{c} > 0$. For a given $\epsilon > 0$ and for $\eta = \frac{\min\lbrace \epsilon, \mathfrak{c}\rbrace}{16}$, choose $ 0 < \delta < \eta$ by shadowing of $f$ at $x$. Choose a homeomorphism $g$ on $Y$ satisfying $d_{GH^{0}}(f, g) < \delta$. Therefore, there exist $\delta$-isometries $i : X\rightarrow Y$ and $j : Y\rightarrow X$ such that $d_{C^{0}}^{Y}(g\circ i, i\circ f) < \delta$ and $d_{C^{0}}^{X}(j\circ g, f\circ j) < \delta$. We claim that, $j$ is a required $\epsilon$-isometry  to establish that $x$ is a GH-stable point of $f$. If $j^{-1}(x) = \phi$, then we are done. Otherwise, assume that $y\in j^{-1}(x)$ and consider a sequence $\lbrace x_{n}^{y} = j(g^{n}(y))\rbrace_{n\in \mathbb{Z}}$. Clearly,
\begin{align*}
d^{X}(x_{n+1}^{y}, f(x_{n}^{y})) &= d^{X}(j(g^{n+1}(y)), f(j(g^{n}(y)))) \\ 
&= d^{X}(j\circ g(g^{n}(y)), f\circ j(g^{n}(y))) \\ 
&< \delta \hspace*{0.2cm} \text{for} \hspace*{0.1cm} \text{each} \hspace*{0.1cm}n\in \mathbb{Z}
\end{align*}   
Hence, $\lbrace x_{n}^{y}\rbrace_{n\in \mathbb{Z}}$ forms a $\delta$-pseudo orbit for $f$ through $x$. By shadowing of $f$ at $x$, choose and fix $z\in B(x, \eta)$ satisfying $d^{X}(f^{n}(z), j(g^{n}(y))) < \eta$, for each $n\in \mathbb{Z}$. Define $h : \mathcal{O}_{g}(y) \rightarrow X$ by $h(g^{n}(y)) = f^{n}(z)$, for each $n\in \mathbb{Z}$. 
\vspace{0.2cm}

To check that, $h$ is well defined, choose $k, m\in \mathbb{Z}$ such that $g^{k}(y) = g^{m}(y)$. Then $j(g^{n+k}(y)) = j(g^{n+m}(y))$, for each $n\in \mathbb{Z}$ and hence
\begin{align*}
d^{X}(f^{n}(f^{k}(z)), f^{n}(f^{m}(z))) &\leq d^{X}(f^{n+k}(z), j(g^{n+k}(y))) + d^{X}(j(g^{n+k}(y)), j(g^{n+m}(y))) \\
&\hspace*{0.2cm}+  d^{X}(j(g^{n+m}(y)), f^{n+m}(z)) \\
&= d^{X}(f^{n+k}(z), j(g^{n+k}(y))) +  d^{X}(j(g^{n+m}(y)), f^{n+m}(z)) \\
&\leq 2\eta < \mathfrak{c} \hspace*{0.2cm} \text{for} \hspace*{0.1cm} \text{each} \hspace*{0.1cm} n\in \mathbb{Z}
\end{align*}

Since, $x$ is a minimally expansive point of $f$ with expansivity constant $\mathfrak{c}$ and $z\in B(x, \mathfrak{c})$, $f$ is expansive on $\overline{\mathcal{O}_{f}(z)}$ with expansivity constant $\mathfrak{c}$ and hence $f^{k}(z) = f^{m}(z)$. Therefore, $h$ is well defined. Moreover,
\begin{align*}
(f\circ h)(g^{n}(y)) &= f\circ (f^{n}(z)) = f^{n+1}(z) \\
&= h(g^{n+1}(y)) = h(g(g^{n}(y))) \\
&= (h\circ g )(g^{n}(y)) \hspace*{0.2cm} \text{for} \hspace*{0.1cm} \text{each} \hspace*{0.1cm} n\in \mathbb{Z}
\end{align*}
Therefore, $f\circ h(u) = h\circ g(u)$, for each $u\in \mathcal{O}_{g}(y)$. Also, $d^{X}(h(g^{n}(y)), j(g^{n}(y))) < \eta$, for each $n\in \mathbb{Z}$ implying $d^{X}(h(u), j(u)) < \eta$, for each $u\in \mathcal{O}_{g}(y)$.
\vspace{0.2cm}
 
Now, we claim that $h$ is uniformly continuous. For $z$ as above and $0 < \epsilon < \mathfrak{c}$, choose an $N\in \mathbb{N}$ from Lemma \ref{L5.4}. By uniform continuity of $g$, choose $0 < \gamma < \epsilon$ such that $d^{Y}(u, v) < \gamma$ implies $d^{Y}(g^{n}(u), g^{n}(v)) < \frac{\mathfrak{c}}{2}$, for all $-N\leq n \leq N$ and for all $u, v\in Y$. Therefore, for all $u, v\in \mathcal{O}_{g}(y)$ satisfying $d^{Y}(u, v) < \gamma$, we have
\begin{align*}
d^{X}(f^n(h(u)), f^n(h(v))) &= d^{X}(h(g^n(u)), h(g^n(v)))\\
&\leq d^{X}(h(g^n(u)), j(g^n(u))) + d^{X}(j(g^n(u)), j(g^n(v))) \\
&\hspace*{0.2cm} + d^{X}(h(g^n(v)), j(g^n(v)))\\
&\leq d^{X}(h(g^n(u)), j(g^n(u))) + \delta + d^{Y}(g^n(u), g^n(v)) \\
&\hspace*{0.2cm} + d^{X}(h(g^n(v)), j(g^n(v)))\\
&\leq 3\eta + \frac{\mathfrak{c}}{2} \\
&< \mathfrak{c} \hspace*{0.2cm} \text{for} \hspace*{0.2cm} \text{all} \hspace*{0.2cm} -N\leq  n \leq N
\end{align*}
Therefore, $d^{X}(h(u), h(v)) < \epsilon$ implying $h$ is uniformly continuous. Since, $Y$ is compact and $d^{X}(j(y_{1}), j(y_{2}))$ $< \delta + d^{Y}(y_{1}, y_{2})$, for all $y_{1}, y_{2}\in Y$, we can extend $h$ continuously to a  function $H: \overline{\mathcal{O}_{g}(y)} \rightarrow X$ such that $f\circ H = H\circ  g$ and $d^{X}(H(u), j(u)) < \epsilon$, for each $u\in \overline{\mathcal{O}_{g}(y)}$, which completes the proof.
\end{proof}

\begin{Corollary}
Every minimally expansive point of a homeomorphism on a compact manifold of dimension atleast $2$ is shadowable if and only if it is topologically stable.
\label{C5.6}
\end{Corollary}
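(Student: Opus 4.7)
The plan is to prove the equivalence by handling the two directions separately. The forward implication is free: if $x$ is a minimally expansive shadowable point, then Theorem \ref{T5.7} already gives topological stability without using the manifold hypothesis at all. The real content lies in the reverse implication, where the dimension $\geq 2$ assumption on the underlying manifold enters decisively, and where, interestingly, minimal expansivity is not needed.

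For the converse, I assume $x$ is topologically stable and fix $\epsilon > 0$. Using topological stability at $x$, I select $\delta_{0} > 0$ from the definition corresponding to $\epsilon$. The crucial ingredient is a classical pseudo-orbit realisation lemma for $C^{0}$-perturbations on manifolds of dimension $\geq 2$: for every $\alpha > 0$ there exists $\delta > 0$ such that any $\delta$-pseudo orbit $\{x_{n}\}_{n \in \mathbb{Z}}$ of $f$ coincides with the $g$-orbit of some homeomorphism $g$ satisfying $d_{C^{0}}(f, g) \leq \alpha$. This is the step where dimension $\geq 2$ is used: one modifies $f$ on a disjoint family of small balls around each pair $(f(x_{n}), x_{n+1})$, and dimension at least $2$ supplies the room needed to extend each local push to a global homeomorphism. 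Applied with $\alpha = \delta_{0}$, this yields a $\delta > 0$ that will serve as the shadowing constant associated to $\epsilon$.

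Now, given any $\delta$-pseudo orbit $\{x_{n}\}_{n \in \mathbb{Z}}$ for $f$ through $x$, produce the homeomorphism $g$ from the realisation lemma, so that $g^{n}(x) = x_{n}$ for every $n$ and $d_{C^{0}}(f, g) \leq \delta_{0}$. Topological stability at $x$ then yields a continuous map $h : \overline{\mathcal{O}_{g}(x)} \to X$ with $f \circ h = h \circ g$ and $d(h(z), z) \leq \epsilon$ for every $z \in \overline{\mathcal{O}_{g}(x)}$. Setting $y := h(x)$, the semiconjugacy gives $f^{n}(y) = h(g^{n}(x)) = h(x_{n})$, so $d(f^{n}(y), x_{n}) \leq \epsilon$ for every $n \in \mathbb{Z}$. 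Thus $y$ is an $\epsilon$-tracing point for $\{x_{n}\}$, proving that $x$ is shadowable.

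The main obstacle, as flagged above, is the realisation lemma: constructing, from a bi-infinite $\delta$-pseudo orbit, a single homeomorphism $g$ that is globally $C^{0}$-close to $f$ and realises the pseudo orbit as a true orbit. The argument has to produce the local modifications on pairwise disjoint neighbourhoods of the pairs $(f(x_{n}), x_{n+1})$ and extend them by the identity outside; it is exactly here that dimension $1$ would fail (an order obstruction on intervals and circles) and dimension $\geq 2$ is exploited, either by quoting Walters' original construction or by reproducing the standard push-around argument in a coordinate chart. Once this realisation step is in hand, the rest of the proof is the short semiconjugacy computation above.
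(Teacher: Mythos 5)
Your overall decomposition is exactly the paper's: the forward direction is Theorem \ref{T5.7} (no manifold hypothesis needed), and the converse is the statement that on a compact manifold of dimension at least $2$ every topologically stable point is shadowable, with minimal expansivity playing no role there. The paper simply quotes this converse as \cite[Lemma 3.11]{KLMP}, whereas you attempt to re-derive it via a pseudo-orbit realisation lemma; your semiconjugacy computation ($y = h(x)$, $f^{n}(y) = h(g^{n}(x)) = h(x_{n})$, hence $d(f^{n}(y), x_{n}) \leq \epsilon$) is fine and is indeed how that lemma is proved.

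The gap is in the realisation lemma as you state it: an arbitrary bi-infinite $\delta$-pseudo orbit $\lbrace x_{n}\rbrace_{n\in\mathbb{Z}}$ cannot in general be realised \emph{exactly} as the orbit of any map, let alone a homeomorphism $C^{0}$-close to $f$. If $x_{m} = x_{n}$ for some $m \neq n$ but $x_{m+1} \neq x_{n+1}$ (which a pseudo orbit is perfectly free to do), no $g$ can satisfy $g^{k}(x) = x_{k}$ for all $k$. Moreover, even when the points are distinct, an infinite pseudo orbit may have the targets $x_{n+1}$ accumulating at scales smaller than $\delta$, so the ``pairwise disjoint small balls'' on which you want to support the local pushes (each ball must contain both $f(x_{n})$ and $x_{n+1}$, hence has radius comparable to $\delta$) cannot be chosen disjoint, and the naive global extension breaks down. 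The classical Walters-type lemma that does hold on manifolds of dimension at least $2$ applies to \emph{finite} pseudo-orbit segments whose points have first been perturbed to be pairwise distinct (possible since a manifold has no isolated points, at the cost of a small additional tracing error), and the bi-infinite case is then recovered by a compactness/limit argument on tracing points of longer and longer segments. With that corrected lemma your argument goes through and reproduces the proof of \cite[Lemma 3.11]{KLMP}; alternatively, citing that lemma directly, as the paper does, avoids the construction altogether.
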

\begin{proof}
Recall that, every topologically stable point of a homeomorphism on a compact manifold of dimension atleast $2$ is shadowable \cite[Lemma 3.11]{KLMP}. Now, use Theorem \ref{T5.7} to complete the proof.
\end{proof}

\begin{Corollary}
Let $f$ be a homeomorphism on $X$. Then, the following statements are true:
\begin{enumerate}
\item[(1)] Every isolated fixed point under $f$ is topologically stable and GH-stable.
\item[(2)] If $f$ is expansive, then every shadowable point of $f$ is topologically stable and GH-stable.
\end{enumerate}
\label{C4.8}
\end{Corollary}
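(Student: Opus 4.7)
My plan is to derive both parts as direct applications of Theorem~\ref{T5.7}, by verifying in each setting that the point in question simultaneously lies in $M_f(X)$ and $Sh(f)$.

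For part~(1), I would read ``isolated fixed point'' as a fixed point $x$ of $f$ that is isolated in the ambient space $X$, so some $r>0$ gives $B(x,r)=\{x\}$. Taking the expansivity constant $\mathfrak{c}=r$, the ball $B(x,\mathfrak{c})$ contains only $x$, whose orbit closure is the singleton $\{x\}$; with no distinct pair of points in this closure, expansivity on $\overline{\mathcal{O}_f(x)}$ is vacuously true and $x\in M_f(X)$. For shadowability, given $\epsilon>0$ I would pick $0<\delta<r$; then any $\delta$-pseudo orbit $\{x_n\}$ through $x$ forces $x_{n+1}\in B(f(x_n),\delta)=B(x,\delta)=\{x\}$ inductively, so the constant sequence at $x$ itself $\epsilon$-traces it. This places $x$ in $M_f(X)\cap Sh(f)$, and Theorem~\ref{T5.7} completes the argument.

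For part~(2), I would observe that global expansivity of $f$ with a constant $\mathfrak{c}$ immediately yields minimal expansivity at \emph{every} $x\in X$ with the same constant: for any $y\in B(x,\mathfrak{c})$ the restriction of $f$ to $\overline{\mathcal{O}_f(y)}\subseteq X$ is automatically expansive with constant $\mathfrak{c}$, since global expansivity descends to arbitrary subsets. Hence $M_f(X)=X$, so any shadowable point lies in $M_f(X)\cap Sh(f)$, and Theorem~\ref{T5.7} again delivers both topological stability and GH-stability.

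There is essentially no technical obstacle in either part; both are routine specializations of Theorem~\ref{T5.7}. The only points worth highlighting are the convention on ``isolated'' in part~(1)---which must be taken in the ambient-space sense, so that the local dynamics near $x$ collapse to triviality and both hypotheses of Theorem~\ref{T5.7} are established vacuously---and in part~(2) the observation that the single global expansivity constant $\mathfrak{c}$ can be reused uniformly as the minimal-expansivity constant at every point, without any passage through a neighborhood argument.
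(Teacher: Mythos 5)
Your proposal is correct and takes the route the paper clearly intends: Corollary \ref{C4.8} is stated without proof as an immediate consequence of Theorem \ref{T5.7}, and your verifications that an isolated (in the ambient space $X$) fixed point lies in $M_{f}(X)\cap Sh(f)$ and that global expansivity with constant $\mathfrak{c}$ gives $M_{f}(X)=X$ with the same constant are exactly the needed steps, with your reading of ``isolated'' consistent with the paper's usage in Example \ref{E5.12} and Corollary \ref{C4.9}. The only cosmetic point is that in part (1) the induction showing the $\delta$-pseudo orbit is constant should also be run through the negative indices (from $d(f(x_{-1}),x_{0})<\delta$ and $B(x,\delta)=\lbrace x\rbrace$ one gets $f(x_{-1})=x$, hence $x_{-1}=x$, and so on), which is immediate.
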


\begin{Corollary}
Let $I_{X}$ be the identity map on a totally disconnected space $X$. Then, every point of $X$ is a topologically stable and a GH-Stable point of $I_{X}$.
\label{C4.9}
\end{Corollary}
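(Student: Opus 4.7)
The plan is to reduce to Theorem \ref{T5.7}: that result upgrades any minimally expansive shadowable point to a topologically stable and GH-stable one, so it suffices to verify that, under $I_X$, every $x\in X$ lies in $M_{I_X}(X)\cap Sh(I_X)$.

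Minimal expansiveness is essentially free. The identity has singleton orbits: $\mathcal{O}_{I_X}(y)=\{y\}$, hence $\overline{\mathcal{O}_{I_X}(y)}=\{y\}$ for every $y\in X$. A singleton contains no pair of distinct points, so expansivity on $\overline{\mathcal{O}_{I_X}(y)}$ holds vacuously with \emph{any} positive constant. Choosing any $\mathfrak{c}>0$, the condition in Definition \ref{D3.1} is trivially satisfied at every $x\in X$, giving $x\in M_{I_X}(X)$.

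The substantive step is shadowability, and this is where total disconnectedness is used. Given $\epsilon>0$, I would invoke the classical fact that a compact totally disconnected metric space admits a finite partition into clopen sets $U_1,\dots,U_k$ with $\mathrm{diam}(U_i)<\epsilon$ (cover $X$ by $(\epsilon/2)$-balls, refine to a clopen cover by zero-dimensionality, extract a finite subcover by compactness, and disjointify). Since the $U_i$ are pairwise disjoint compact sets, $\delta:=\min_{i\neq j}d(U_i,U_j)>0$. For any $\delta$-pseudo orbit $\{x_n\}_{n\in\mathbb{Z}}$ through $x$, the identity gives $d(x_n,x_{n+1})=d(I_X(x_n),x_{n+1})<\delta$, and this inequality forbids crossing between distinct pieces of the partition. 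Inducting in both directions from $n=0$, all $x_n$ lie in the single piece $U_i$ containing $x$. The point $y=x$ then $\epsilon$-traces the pseudo orbit, because $d(I_X^n(x),x_n)=d(x,x_n)\leq \mathrm{diam}(U_i)<\epsilon$ for every $n\in\mathbb{Z}$. Hence $x\in Sh(I_X)$.

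The only non-routine ingredient in this plan is the existence of arbitrarily fine finite clopen partitions of a compact totally disconnected metric space; I would either cite it or sketch the short argument above. Once both memberships $x\in M_{I_X}(X)$ and $x\in Sh(I_X)$ are in hand, Theorem \ref{T5.7} immediately delivers the conclusion that $x$ is a topologically stable and a GH-stable point of $I_X$.
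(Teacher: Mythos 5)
Your proposal is correct and follows exactly the route the paper intends: the corollary is stated as a direct consequence of Theorem \ref{T5.7}, and you supply the two verifications the paper leaves implicit, namely that orbit closures under $I_X$ are singletons (so every point is vacuously minimally expansive with any constant) and that total disconnectedness plus compactness yields arbitrarily fine finite clopen partitions, whence every point is shadowable. The only cosmetic remark is the degenerate case $k=1$ of your partition, where $\min_{i\neq j}d(U_i,U_j)$ is over an empty set and any $\delta>0$ works; this does not affect correctness.
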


\begin{Example}
Let $g$ be an expansive homeomorphism with shadowing property on an uncountable compact metric space $(Y,d_0)$. Let $p$ be a periodic point of $g$ with prime period $t\geq 2$. Let $X=Y\cup E$, where $E$ is an infinite enumerable set. Set $Q=\bigcup_{k\in\mathbb{N}} \lbrace 1,2,3\rbrace\times\lbrace k\rbrace\times\lbrace 0,1,2,3,...,t-1\rbrace$. Suppose that $r:\mathbb{N}\rightarrow E$ and $s:Q\rightarrow \mathbb{N}$ are bijections. Then, consider the bijection $q:Q\rightarrow E$ defined as $q(i,k,j)=r(s(i,k,j))$, for each $(i,k,j)\in Q$. Therefore, any point $x\in E$ has the form $x=q(i,k,j)$ for some $(i,k,j)\in Q$. 
\medskip

Define a function $d:X\times X\rightarrow\mathbb{R}^+$ by  
\[d(a,b)=\begin{cases} 
0 & \textnormal {if $a=b$},
\\
d_0(a,b) & \textnormal {if $a,b\in Y$} 
\\
\frac{1}{k}+d_0(g^j(p),b) & \textnormal {if $a=q(i,k,j)$ and $b\in Y$}
\\
\frac{1}{k}+d_0(a,g^j(p)) & \textnormal {if $a\in Y$ and $b=q(i,k,j)$} 
\\
\frac{1}{k} & \textnormal {if $a=q(i,k,j)$,$b=q(l,k,j)$ and $i\neq l$} 
\\
\frac{1}{k}+\frac{1}{m}+d_0(g^j(p),g^r(p)) & \textnormal {if $a=q(i,k,j)$,$b=q(i,m,r)$ and $k\neq m$ or $j\neq r$}    
\end{cases}\] 
Define a map $f:X\rightarrow X$ by 
\begin{center}
\[f(x)=\begin{cases}
g(x) & \textnormal {if $x\in Y$}
\\
q(i,k,(j+1))$ mod $t & \textnormal {if $x=q(i,k,j)$} 
\end{cases}\] 
\end{center}

Following steps as in \cite[(4) on Page 3742-3743]{CCN} and from \cite[Example 3.13]{DKDM}, we get that $(X,d)$ is a compact metric space, $f$ is a homeomorphism with shadowing property and $f$ is not pointwise expansive. Let $\mathfrak{c}$ be an expansivity constant of $g$.  We claim that $M_{f}(X) = X$. 
Since, each $a\in Q$ is an isolated periodic point, we get that $a\in M_{f}(X)$. Note that, each $x\in Y\setminus \mathcal{O}_{g}(p)$ is a minimally expansive point of $f$ with expansivity constant $0 < \mathfrak{d} < \min_{0\leq j < t}\lbrace d_{0}(g^{j}(p), x), \mathfrak{c}\rbrace$. Also, $x\in \mathcal{O}_{g}(p)$ is minimally expansive with expansivity constant $0 < \mathfrak{d} < \min_{0\leq j, r < t}\lbrace d_{0}(g^{j}(p), g^{r}(p)), \mathfrak{c}\rbrace$, where $j\neq r$. From Theorem \ref{T5.7}, every point of $X$ is a topologically stable and a GH-Stable point of $f$. 
\label{E5.12}
\end{Example}

\section{\small{$\mu$-Uniformly Expansive, $\mu$-Shadowable and $\mu$-Topologically Stable points}}
In this section, we introduce $\mu$-uniformly expansive and $\mu$-shadowable points for a Borel measure $\mu$ (with respect to a homeomorphism $f$ on $X$). These notions are important to guarantee the stability of a Borel measure at a point. Firstly, we recall necessary notions required in this section.

A point $x\in X$ is called an atom for a Borel measure $\mu$, if $\mu(\lbrace x\rbrace)>0$. 
A measure $\mu$ on $X$ is said to be non-atomic, if $\mu$ has no atom. 
Every Borel measure is assumed to be non-trivial i.e. $\mu(X) > 0$. 
Let $f$ be a homeomorphism on $X$. 
Then, a non-atomic Borel measure $\mu$ on $X$ is said to be expansive at $x\in X$ (with respect to $f$), if there exists   a $\mathfrak{c} > 0$ such that $\mu(\Phi_{f}^{\mathfrak{c}}(x))=0$, where $\Phi^{\mathfrak{c}}_{f}(x)=\lbrace y\in X\mid d(f^n(x),f^n(y))\leq \mathfrak{c}$, for each $n\in\mathbb{Z}\rbrace$ \cite{DKDM}. 
A Borel measure $\mu$ on $X$ is said to be expansive (with respect to $f$), if there exists a $\mathfrak{c} > 0$ such that $\mu(\Phi_{f}^{\mathfrak{c}}(x)) = 0$, for each $x\in X$. A homeomorphism $f$ is said to be a measure-expansive homeomorphism, if there exists a $\mathfrak{c} > 0$ (known as expansivity constant) such that for any non-atomic Borel measure $\mu$ on $X$ and for each $x\in X$, we have $\mu(\Phi_{f}^{\mathfrak{c}}(x)) = 0$ \cite{MM}.
Given a homeomorphism $h : X\rightarrow Y$ and a Borel measure $\mu$ on $X$, the pull-back measure on $Y$ is given by $h^{*}(\mu) = \mu \circ h^{-1}$.
\medskip

A Borel measure $\mu$ on $X$ has shadowing (with respect to $f$), if for every $\epsilon > 0$, there exists a $\delta > 0$ and a Borelian $B\subset X$ with $\mu(X\setminus B) = 0$ such that every $\delta$-pseudo orbit for $f$ through $B$ can be $\epsilon$-traced through $f$ by some point of $X$ \cite{LMT}.
\medskip

Let $Z$ be a subset of $X$. The set $2^{X}$ denotes the collection of all subsets of $X$. A map $H: Z \rightarrow 2^{X}$ denotes a set valued map. The set $Dom(H) = \lbrace z\in Z : H(z)\neq \phi\rbrace$ denotes the domain of $H$. A map $H$ is said to be compact valued if $H(z)$ is compact, for each $z\in Z$. For a given $\epsilon > 0$, we write $d^{X}(H, Id) < \epsilon$ whenever $H(z)\subset B[z, \epsilon]$, for each $z\in Z$. A map $H$ is said to be upper semi-continuous, if for each $z\in Dom(H)$ and for every neighborhood $O$ of $H(z)$, there exists a $\gamma > 0$ such that $H(x)\subset O$, whenever $d^{X}(x, z) < \gamma$, for each $x\in Z$.
\medskip

Let $f$ be a homeomorphism on $X$ and let $\mu$ be a Borel measure on $X$. Then, $\mu$ is said to be topologically stable (with respect to $f$), if for every $\epsilon > 0$, there exists a $\delta > 0$ such that for every homeomorphism $g$ satisfying $d_{C^{0}}(f, g) \leq \delta$, there exists an upper semi-continuous compact valued map $H: X\rightarrow 2^{X}$ with measurable domain such that:
\begin{enumerate}
\item[(i)] $\mu(X\setminus Dom(H)) = 0$.
\item[(ii)] $\mu (H(x)) = 0$, for each $x\in X$.
\item[(iii)] $d(H, Id) \leq \epsilon$.
\item[(iv)] $f\circ H = H\circ g$ \cite{LMT}.
\end{enumerate}

\begin{Definition}
Let $f$ be a homeomorphism on $X$, $\mu$ be a Borel measure on $X$ and let $x\in X$. Then,
\begin{enumerate}
\item $x$ is said to be a $\mu$-uniformly expansive point of $f$, if there exists a $\mathfrak{c} > 0$ such that $\mu(\Gamma_{f}^{\mathfrak{c}}(z)) = 0$, for each $z\in B(x, \mathfrak{c})$, where $\Gamma_{f}^{\mathfrak{c}}(z) = \lbrace y\in B(x, \mathfrak{c}) : d(f^{n}(y), f^{n}(z))\leq \mathfrak{c}\text{, for each }\hspace*{0.1cm} n\in \mathbb{Z}  \rbrace$. Such a constant $\mathfrak{c}$ is said to be an expansivity constant for $\mu$-uniform expansivity of $f$ at $x$.
\item $x$ is said to be a uniformly measure expansive point of $f$, if there exists a $\mathfrak{c} > 0$ such that for every non-atomic Borel measure $\mu$ on $X$ we have $\mu(\Gamma_{f}^{\mathfrak{c}}(z)) = 0$, for each $z\in B_{\mathfrak{c}}(x)$. Such a constant $\mathfrak{c}$ is said to be a expansivity constant for uniform measure expansivity of $f$ at $x$.
\end{enumerate}
The set of all $\mu$-uniformly expansive (uniformly measure expansive) points of $f$ is denoted by $UE_{f}^{\mu}(X)$($UE_{f}^{M}(X)$).
\label{D4.1}
\end{Definition}

\begin{Definition}
Let $f$ be a homeomorphism on $X$ and let $\mu$ be a Borel measure on $X$. A point $x\in X$ is said to be a $\mu$-shadowable point of $f$ if for every $\epsilon > 0$, there exists a $\delta > 0$ and a Borelian $B\subset X$ with $\mu(X\setminus B) = 0$ such that every $\delta$-pseudo orbit for $f$ through $B\cap B(x, \delta)$ is $\epsilon$-traced through $f$ by some point of $X$. The set of all $\mu$-shadowable points of $f$ is denoted by $Sh_{f}^{\mu}(X)$.
\label{D4.2}
\end{Definition}

\begin{Remark}
Let $f$ be a homeomorphism on $X$ and let $\mu$ be a Borel measure on $X$. Then, the following statements are easy to check.
\begin{enumerate}
\item[(i)] If $x\in UE_{f}^{\mu}(X)$ with expansivity constant $\mathfrak{c}$, then every point $z\in B(x, \frac{\mathfrak{c}}{4})$ is a $\mu$-uniformly expansive point of $f$ with expansivity constant $\frac{\mathfrak{c}}{4}$.
\item[(ii)]  $UE_{f}(X)\subset UE_{f}^{M}(X)$.
\item[(iii)] From \cite[Lemma 2.1]{MS}, we observe that a point $x\in X$ is a shadowable point of a homeomorphism $f$ on $X$ if and only if for every $\epsilon > 0$, there exists a $\delta > 0$ such that every $\delta$-pseudo-orbit for $f$ through $B(x, \delta)$ can be $\epsilon$-traced through $f$ by some point of $X$. Therefore, every shadowable point of a  homeomorphism $f$ on $X$ is a $\mu$-shadowable point of $f$, for every Borel measure $\mu$ on $X$.
\end{enumerate}
\label{R4.3}
\end{Remark}

\begin{Proposition}
Let $f$ be a homeomorphism on $X$ and let $\mu$ be a Borel measure on $X$. Then, the following statements are true: 
\begin{enumerate}
\item[(i)] $\mu$ is expansive (with respect to $f$) if and only if $UE_{f}^{\mu}(X) = X$.
\item[(ii)] $f$ is measure-expansive if and only if $UE^{M}_{f}(X) = X$.
\item[(iii)] $\mu$ has shadowing (with respect to $f$) if and only if $Sh_{f}^{\mu}(X) = X$.
\end{enumerate}
\label{T4.4}
\end{Proposition}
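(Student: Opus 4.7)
The three forward implications are essentially the same observation: a global constant witnessing expansivity, measure-expansivity, or shadowing automatically witnesses the corresponding pointwise property at every $x\in X$. In (i) and (ii), for any $x\in X$ and any $z\in B(x,\mathfrak{c})$ one has $\Gamma_f^{\mathfrak{c}}(z)\subset \Phi_f^{\mathfrak{c}}(z)$, so the pointwise null-set condition follows from the global one (uniformly in $\mu$ in (ii)). In (iii), for the fixed global pair $(\delta,B)$, every $\delta$-pseudo-orbit through $B\cap B(x,\delta)$ is in particular a $\delta$-pseudo-orbit through $B$, hence $\epsilon$-traced.

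For the reverse directions of (i) and (ii), the plan is a uniform extraction via compactness. For each $x\in X$ pick $\mathfrak{c}(x)>0$ witnessing the pointwise hypothesis at $x$, extract a finite subcover $\{B(x_i,\mathfrak{c}(x_i)/2)\}_{i=1}^{k}$ of $X$, and set $\delta=\min_i \mathfrak{c}(x_i)/2$. Given any $y\in X$, choose $i$ with $d(x_i,y)<\mathfrak{c}(x_i)/2$; if $z\in \Phi_f^{\delta}(y)$ then $d(x_i,z)<\mathfrak{c}(x_i)$, so $z\in B(x_i,\mathfrak{c}(x_i))$, and $d(f^n(y),f^n(z))\leq \delta\leq \mathfrak{c}(x_i)$ for every $n\in\mathbb{Z}$. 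Consequently $\Phi_f^{\delta}(y)\subset \Gamma_f^{\mathfrak{c}(x_i)}(y)$, and the latter has $\mu$-measure zero by the pointwise hypothesis at $x_i$, since $y\in B(x_i,\mathfrak{c}(x_i))$. This produces a uniform expansivity constant $\delta$, as required. The argument for (ii) is verbatim the same, because the extracted constants $\mathfrak{c}(x_i)$ depend only on the finite cover and not on any particular non-atomic measure.

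The reverse direction of (iii) follows the same compactness template, adapted to the Borelian data. Given $\epsilon>0$, use $\mu$-shadowability at each $x$ to pick a pair $(\delta_x,B_x)$ with $\mu(X\setminus B_x)=0$, extract a finite subcover $\{B(x_i,\delta_{x_i})\}_{i=1}^{k}$, and set $\delta=\min_i \delta_{x_i}$ and $B=\bigcap_{i=1}^{k} B_{x_i}$. By finite subadditivity $\mu(X\setminus B)\leq \sum_i \mu(X\setminus B_{x_i})=0$, so $B$ is a Borelian of full measure. Any $\delta$-pseudo-orbit $\{z_n\}$ through $B$ has $z_0\in B\subset B_{x_j}$ and $z_0\in B(x_j,\delta_{x_j})$ for some $j$; since $\delta\leq \delta_{x_j}$, it is a $\delta_{x_j}$-pseudo-orbit through $B_{x_j}\cap B(x_j,\delta_{x_j})$, and pointwise $\mu$-shadowability at $x_j$ delivers the $\epsilon$-tracing.

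The only point requiring care is that the pointwise constants $\mathfrak{c}(x)$ and $\delta_x$ vary with $x$, so a naive infimum over all of $X$ could vanish; compactness of $X$ is what rescues the argument by reducing to a finite minimum, and on the measure side one must likewise keep the index set finite so that $\bigcap_i B_{x_i}$ remains of full measure. Beyond this, I expect no further obstacle: everything reduces to routine applications of the triangle inequality, finite subadditivity of $\mu$, and the inclusions $\Gamma_f^{\mathfrak{c}}(z)\subset \Phi_f^{\mathfrak{c}}(z)$.
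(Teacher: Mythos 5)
Your proposal is correct and follows essentially the same route as the paper: the forward directions come from the inclusion $\Gamma_f^{\mathfrak{c}}(z)\subset\Phi_f^{\mathfrak{c}}(z)$ and the definitions, and the converses use compactness to extract a finite subcover, take the minimum constant, and (for (iii)) intersect the finitely many full-measure Borelians. The only difference is cosmetic: where the paper invokes a Lebesgue number together with Remark \ref{R4.3}(i) using quarter constants, you obtain the needed inclusion $\Phi_f^{\delta}(y)\subset\Gamma_f^{\mathfrak{c}(x_i)}(y)$ directly with half constants and the triangle inequality, which is equally valid.
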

\begin{proof}
Let $f$ be a homeomorphism on $X$ and let $\mu$ be a Borel measure on $X$.
\begin{enumerate}
\item[(i)] Clearly, if $\mu$ is expansive (with respect to $f$) with expansivity constant $\mathfrak{c}$, then $x\in X$ is a $\mu$-uniformly expansive point of $f$ with expansivity constant $\mathfrak{c}$ and hence $UE_{f}^{\mu}(X) = X$. Conversely, suppose that $x\in X$ is a $\mu$-uniformly expansive point of $f$ with expansivity constant $\mathfrak{c}_{x}$. Consider an open cover $\Lambda = \lbrace B(x, \frac{\mathfrak{c}_{x}}{4}) : x\in X\rbrace$. Since $X$ is compact, we can choose $x_{1}, x_{2}, . . ., x_{k}$ such that $\cup_{i = 1}^{k}B(x_{i}, \frac{\mathfrak{c}_{x_{i}}}{4}) = X$. Set $\gamma = \min \lbrace \frac{\mathfrak{c}_{x_{1}}}{4}, \frac{\mathfrak{c}_{x_{2}}}{4}, . . ., \frac{\mathfrak{c}_{x_{k}}}{4}, \alpha\rbrace $, where $\alpha$ is a Lebesgue number for the covering $\Lambda$. Let $y\in X$. Then $y\in B(x_{i}, \frac{\mathfrak{c_{x_{i}}}}{4})$, for some $1\leq i \leq k$. From Remark \ref{R4.3}(i), we have $\mu(\Phi_{f}^{\gamma}(y)) = 0$. Since $y$ is chosen arbitrarily, we get that $\mu$ is expansive (with respect to $f$) with expansivity constant $\gamma$.
\item[(ii)] Proof is similar to the proof of (i).
\item[(iii)] Forward implication follows from corresponding definitions. Conversely, suppose that  $Sh_{f}^{\mu}(X) = X$. For a given $\epsilon > 0$, choose a $\delta_{x} > 0$ and a Borelian $B_{x}\subset X$ with $\mu(X\setminus B_{x}) = 0$ by $\mu$-shadowing of $f$ at $x$, for each $x\in X$. Since $X$ is compact, we can choose $x_{1}, x_{2}, . . ., x_{k}$ such that $\cup_{i=1}^{k}B(x_{i}, \delta_{x_{i}}) = X$. Consider a Borelian $B = \cap_{i=1}^{k}B_{x_{i}}$ and $\delta = \min_{i=1}^{k}\delta_{x_{i}}$. Since $X\setminus B\subset \cup_{i=1}^{k}(X\setminus B_{x_{i}})$, we have $\mu(X\setminus B) = 0$. It is easy to check that, every $\delta$-pseudo orbit for $f$ through $B$ can be $\epsilon$-traced through $f$ by some point of $X$. Since $\epsilon$ is chosen arbitrarily, we get that $\mu$ has shadowing (with respect to $f$).
\end{enumerate}
\end{proof}

\begin{Proposition}
Let $f$ be a homeomorphism on $X$ and let $\mu$ be a Borel measure on $X$. If $h$ is a homeomorphism from $X$ to $Y$, then the following statements are true:
\begin{enumerate}
\item[(i)] $UE_{h\circ f \circ h^{-1}}^{h^{*}(\mu)}(Y) =h(UE_{f}^{\mu}(X))$.
\item[(ii)] $Sh_{h\circ f \circ h^{-1}}^{h^{*}(\mu)}(Y) = h(Sh_{f}^{\mu}(X))$.
\end{enumerate} 
\label{T4.5}
\end{Proposition}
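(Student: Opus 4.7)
The overall approach is that both statements reduce to transferring the defining conditions through the conjugating homeomorphism $h$, using uniform continuity of $h$ and $h^{-1}$ (which holds since $X$ and $Y$ are compact) and the tautology $(h^{-1})^{*}(h^{*}(\mu))=\mu$. Because the roles of $(f,\mu,h)$ and $(h\circ f\circ h^{-1}, h^{*}(\mu), h^{-1})$ are symmetric under this substitution, in each of (i) and (ii) I only need to establish the forward inclusion $h(\cdot)\subset(\cdot)$, and the reverse will follow by applying the same argument with $h$ replaced by $h^{-1}$.

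For (i), let $y=h(x)$ with $x\in UE_{f}^{\mu}(X)$ and let $\mathfrak{c}>0$ be an expansivity constant for $\mu$-uniform expansivity of $f$ at $x$. By uniform continuity of $h^{-1}$ I choose $0<\mathfrak{d}\leq\mathfrak{c}$ such that $d^{Y}(y_{1},y_{2})\leq\mathfrak{d}$ forces $d^{X}(h^{-1}(y_{1}),h^{-1}(y_{2}))\leq\mathfrak{c}$ (for a strict version, halve $\mathfrak{c}$ first). Set $g=h\circ f\circ h^{-1}$, fix $w\in B(y,\mathfrak{d})$, and put $z=h^{-1}(w)$, so $z\in B(x,\mathfrak{c})$. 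For any $v\in\Gamma_{g}^{\mathfrak{d}}(w)$, the conditions $v\in B(y,\mathfrak{d})$ and $d^{Y}(g^{n}(v),g^{n}(w))\leq\mathfrak{d}$ for all $n\in\mathbb{Z}$ translate, using $g^{n}=h\circ f^{n}\circ h^{-1}$ and the choice of $\mathfrak{d}$, into $h^{-1}(v)\in B(x,\mathfrak{c})$ and $d^{X}(f^{n}(h^{-1}(v)),f^{n}(z))\leq\mathfrak{c}$ for every $n$. Thus $h^{-1}(\Gamma_{g}^{\mathfrak{d}}(w))\subset\Gamma_{f}^{\mathfrak{c}}(z)$, and since $\Gamma_{g}^{\mathfrak{d}}(w)$ is Borel (so its $h^{-1}$-image is), I conclude
\begin{align*}
h^{*}(\mu)(\Gamma_{g}^{\mathfrak{d}}(w))=\mu(h^{-1}(\Gamma_{g}^{\mathfrak{d}}(w)))\leq\mu(\Gamma_{f}^{\mathfrak{c}}(z))=0.
\end{align*}
Since $w$ was arbitrary in $B(y,\mathfrak{d})$, this shows $y\in UE_{g}^{h^{*}(\mu)}(Y)$ with expansivity constant $\mathfrak{d}$, proving $h(UE_{f}^{\mu}(X))\subset UE_{g}^{h^{*}(\mu)}(Y)$.

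For (ii), let $y=h(x)$ with $x\in Sh_{f}^{\mu}(X)$ and let $\epsilon>0$ be given (for $y$ with respect to $g$ and $h^{*}(\mu)$). By uniform continuity of $h$ I pick $\epsilon'>0$ so that $d^{X}<\epsilon'$ implies $d^{Y}(h(\cdot),h(\cdot))<\epsilon$. Apply $\mu$-shadowability of $f$ at $x$ with constant $\epsilon'$ to obtain $\delta>0$ and a Borel set $B\subset X$ with $\mu(X\setminus B)=0$. By uniform continuity of $h^{-1}$ I then choose $\eta>0$ with $\eta\leq\delta$ such that $d^{Y}<\eta$ implies $d^{X}(h^{-1}(\cdot),h^{-1}(\cdot))<\delta$, and set $B'=h(B)$; this $B'$ is Borel in $Y$ (because $h$ is a homeomorphism) and $h^{*}(\mu)(Y\setminus B')=\mu(X\setminus B)=0$. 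For any $\eta$-pseudo orbit $\{y_{n}\}_{n\in\mathbb{Z}}$ for $g$ through $B'\cap B(y,\eta)$, the sequence $x_{n}:=h^{-1}(y_{n})$ satisfies $x_{0}\in B\cap B(x,\delta)$ and, using $f(h^{-1}(y_{n}))=h^{-1}(g(y_{n}))$, is a $\delta$-pseudo orbit for $f$; hence it is $\epsilon'$-traced by some $w\in X$, and applying $h$ to $f^{n}(w)$ shows that $h(w)$ $\epsilon$-traces $\{y_{n}\}$. Thus $y\in Sh_{g}^{h^{*}(\mu)}(Y)$, proving $h(Sh_{f}^{\mu}(X))\subset Sh_{g}^{h^{*}(\mu)}(Y)$.

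The argument has no serious obstacle; the only mild care required is to keep track of strict versus non-strict inequalities in the $\Gamma$-sets and pseudo-orbit definitions, which I handle by asking for uniform-continuity moduli with a little slack (e.g.\ working with $\mathfrak{c}/2$ in place of $\mathfrak{c}$), and to ensure Borel measurability of images under $h$ and $h^{-1}$, which is automatic because both are homeomorphisms. The reverse inclusions in (i) and (ii) follow by exchanging $(f,\mu,h)$ with $(g,h^{*}(\mu),h^{-1})$ and invoking $(h^{-1})^{*}(h^{*}(\mu))=\mu$.
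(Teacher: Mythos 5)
Your proposal is correct and follows essentially the same route as the paper: transfer the defining data through $h$ using uniform continuity of $h$ and $h^{-1}$, prove only the inclusion $h(\cdot)\subset(\cdot)$, and obtain the reverse inclusion by replacing $(f,\mu,h)$ with $(h\circ f\circ h^{-1},h^{*}(\mu),h^{-1})$. The only difference is that you write out the verifications (the inclusion $h^{-1}(\Gamma_{g}^{\mathfrak{d}}(w))\subset\Gamma_{f}^{\mathfrak{c}}(z)$ and the pseudo-orbit transfer, with the strict/non-strict inequality care) which the paper leaves as ``easy to check.''
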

\begin{proof}
Let $f$ be a homeomorphism on $X$, $\mu$ be a Borel measure on $X$ and let $h$ be a homeomorphism from $X$ to $Y$.
\begin{enumerate}
\item[(i)] Let $y\in h(UE_{f}^{\mu}(X))$ and choose $z\in UE_{f}^{\mu}(X)$ with expansivity constant $\mathfrak{c}$ such that $y = h(z)$. 
Choose a $\mathfrak{d} > 0$ such that $d^{Y}(y_{1}, y_{2}) < \mathfrak{d}$ implies $d^{X}(h^{-1}(y_{1}), h^{-1}(y_{2}))$ $ < \mathfrak{c}$, for all $y_{1}, y_{2}\in Y$. It is easy to check that, $y\in UE_{h\circ f \circ h^{-1}}^{h^{*}(\mu)}(Y)$ with expansive constant $\mathfrak{d}$ and hence $h(UE_{f}^{\mu}(X))\subset UE_{h\circ f \circ h^{-1}}^{h^{*}(\mu)}(Y)$. 
Replace $h$ by $h^{-1}$, $f$ by $h\circ f\circ h^{-1}$ and $\mu$ by $h^{*}(\mu)$ in the last inclusion to complete the proof.

\item[(ii)] Let $y\in h(Sh_{f}^{\mu}(X))$ and choose $z\in Sh_{f}^{\mu}(X)$ such that $y = h(z)$. For a given $\epsilon > 0$, choose an $\eta > 0$ such that $d^{X}(x_{1},x_{2}) < \eta$ implies $d^{Y}(h(x_{1}), h(x_{2})) < \epsilon$, for all $x_{1}, x_{2}\in X$. Choose a $\gamma > 0$ and a Borelian $B\subset X$ with $\mu(X\setminus B) = 0$ such that every $\gamma$-pseudo orbit for $f$ through $B\cap B(z, \gamma)$ can be $\eta$-traced through $f$ by some point of $X$. Choose a $\delta > 0$ such that $d^{Y}(y_{1}, y_{2}) < \delta$ implies $d^{X}(h^{-1}(y_{1}), h^{-1}(y_{2})) < \gamma$, for all $y_{1}, y_{2}\in Y$. It is easy to check that, $h^{*}(\mu)(Y\setminus h(B)) = 0$ and every $\delta$-pseudo orbit $\lbrace y_{n}\rbrace_{n\in \mathbb{Z}}$ for $h\circ f\circ h^{-1}$ through $h(B)\cap B(y, \delta)$ can $\epsilon$-traced through $h\circ f\circ h^{-1}$ and hence $y\in Sh_{h\circ f \circ h^{-1}}^{h^{*}(\mu)}(Y)$. Since $y$ is chosen arbitrarily, we get that $h(Sh_{f}^{\mu}(X))\subset Sh_{h\circ f \circ h^{-1}}^{h^{*}(\mu)}(Y)$. Replace $h$ by $h^{-1}$, $f$ by $h\circ f\circ h^{-1}$ and $\mu$ by $h^{*}(\mu)$ in the last inclusion to complete the proof.
\end{enumerate}
\end{proof}

\begin{theorem} 
Let $\mu$ be a Borel measure on $X$, $\lbrace f_{n}\rbrace_{n\in \mathbb{N}}$ be a sequence of homeomorphisms on $X$ and let $f$ be a homeomorphism on $X$ such that $f_{n}\xrightarrow{uc} f$ and $f_{n}^{-1}\xrightarrow{uc} f^{-1}$. Then, $x\in UE_{f}^{\mu}(X)$ if and only if there exists a $\delta > 0$ such that $\mu(\lbrace y\in B(x, \delta) : \cup_{m\geq 1} \cap_{n\geq m} E_{z}(f_{n}, y, \frac{\delta}{3}) = \phi\rbrace) = 0$, for each $z\in B(x, \delta)$.
\label{T4.6}
\end{theorem}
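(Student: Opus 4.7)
The plan is to imitate the proof of Theorem \ref{T3.5}(1), replacing the conclusion about distinct pairs $u, v$ by a measure-theoretic containment of two ``bad'' sets: the set $\{y \in B(x, \delta) : \cup_{m\geq 1}\cap_{n\geq m}E_z(f_n, y, \delta/3) = \phi\}$ appearing in the hypothesis and the set $\Gamma_f^{\mathfrak{c}}(z)$ from the definition of $\mu$-uniform expansivity. Lemma \ref{L3.4}, applied separately to the two points $y$ and $z$ (and to both signs of the iterate, the negative case using $f_n^{-1}\xrightarrow{uc} f^{-1}$), will transfer inequalities between $f$ and $f_n$ in either direction, and the constants $\delta$ and $\mathfrak{c}$ on the two sides of the equivalence will differ by the factor $3$ already hardwired into the statement.

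For the forward implication, I will set $\delta = \mathfrak{c}$, where $\mathfrak{c}$ is an expansivity constant for $\mu$-uniform expansivity of $f$ at $x$. Given $z \in B(x, \delta)$ and any $y$ in the displayed set, I unpack ``$\cup_{m\geq 1}\cap_{n\geq m} E_z(f_n, y, \delta/3) = \phi$'' to produce, for each $p \in \mathbb{Z}$ and each $m \geq 1$, some $n(p, m) \geq m$ with $d(f_{n(p,m)}^p(y), f_{n(p,m)}^p(z)) \leq \delta/3$; then letting $m \to \infty$ and invoking Lemma \ref{L3.4} twice (once each for $y$ and $z$) delivers $d(f^p(y), f^p(z)) \leq \delta/3 \leq \mathfrak{c}$ for every $p$. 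Since $y, z \in B(x, \mathfrak{c})$, this places $y$ in $\Gamma_f^{\mathfrak{c}}(z)$, so the displayed set is contained in $\Gamma_f^{\mathfrak{c}}(z)$ and hence has $\mu$-measure zero.

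For the converse, given the $\delta$ from the hypothesis, I choose any $\mathfrak{c}$ with $0 < \mathfrak{c} < \delta/3$ and verify $\Gamma_f^{\mathfrak{c}}(z) \subset \{y \in B(x, \delta) : \cup_{m\geq 1}\cap_{n\geq m} E_z(f_n, y, \delta/3) = \phi\}$ for each $z \in B(x, \mathfrak{c}) \subset B(x, \delta)$. For $y \in \Gamma_f^{\mathfrak{c}}(z)$ I have $d(f^p(y), f^p(z)) \leq \mathfrak{c}$ for every $p$; Lemma \ref{L3.4} with slack $\eta = (\delta/3 - \mathfrak{c})/2$ applied to both $y$ and $z$ yields, for any fixed $m \geq 1$, some $N \geq m$ with $d(f_N^p(y), f_N^p(z)) \leq \mathfrak{c} + 2\eta = \delta/3$, hence $p \notin \cap_{n \geq m} E_z(f_n, y, \delta/3)$. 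Since $m$ and $p$ were arbitrary, the union is empty, so $y$ lies in the hypothesized measure-zero set, and $\mu(\Gamma_f^{\mathfrak{c}}(z)) = 0$ for every $z \in B(x, \mathfrak{c})$, giving $x \in UE_f^{\mu}(X)$.

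The main (essentially bookkeeping) obstacle is arranging constants so that $\delta$, $\mathfrak{c}$, and the Lemma \ref{L3.4} slack fit compatibly and so that the needed ball inclusion $B(x, \mathfrak{c}) \subset B(x, \delta)$ is available on the converse side while $B(x, \delta) = B(x, \mathfrak{c})$ is used on the forward side; apart from this there is no genuine analytic difficulty, since the strict inequality defining $E_z(f_n, y, \delta/3)$ versus the non-strict bound produced by the triangle inequality already works in our favour when passing between the two ``bad'' sets.
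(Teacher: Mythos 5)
Your proposal is correct and follows essentially the same route as the paper: both directions are proved via the same two containments (the ``bad'' set inside $\Gamma_f^{\mathfrak{c}}(z)$ with $\delta=\mathfrak{c}$, and conversely $\Gamma_f^{\mathfrak{c}}(z)$ inside the hypothesized null set), with Lemma \ref{L3.4} (plus the inverse convergence for negative iterates) transferring the estimates. The only difference is cosmetic: the paper fixes the expansivity constant as $\delta/9$ in the converse and leaves the containments as ``easy to check,'' whereas you take any $\mathfrak{c}<\delta/3$ and write out the verification.
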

\begin{proof}
Let $x$ be a $\mu$-uniformly expansive point of $f$ with expansivity constant $\mathfrak{c}$. For each $z\in B(x, \mathfrak{c})$, we set $A_{z} = \lbrace y\in B(x, \mathfrak{c}) : \cup_{m\geq 1} \cap_{n\geq m} E_{z}(f_{n}, y, \frac{\mathfrak{c}}{3}) = \phi\rbrace$. It is easy to check that, $A_{z}\subset \Gamma_{f}^{\mathfrak{c}}(z)$. Since $z$ is chosen arbitrarily, $\mu(\lbrace y\in B(x, \mathfrak{c}) : \cup_{m\geq 1} \cap_{n\geq m} E_{z}(f_{n}, y, \frac{\mathfrak{c}}{3}) = \phi\rbrace) \leq \mu(\Gamma_{f}^{\mathfrak{c}}(z)) = 0$, for each $z\in B(x, \mathfrak{c})$.
Conversely, choose a $\delta > 0$ such that $\mu(C_{z}) = 0$, for each $z\in B(x, \delta)$ where $C_{z} = \lbrace y\in B(x, \delta) : \cup_{m\geq 1} \cap_{n\geq m} E_{z}(f_{n}, y, \frac{\delta}{3}) = \phi\rbrace = 0$. 
It is easy to check that, $\Gamma_{f}^{\frac{\delta}{9}}(z)\subset C_{z}$, for each $z\in B(x, \frac{\delta}{9})$.
Hence, $\mu(\Gamma_{f}^{\frac{\delta}{9}}(z))\leq \mu(C_{z}) = 0$, for each $z\in B(x, \frac{\delta}{9})$. Since $z$ is chosen arbitrarily, we have $x\in UE_{f}^{\mu}(X)$ with expansivity constant $\frac{\delta}{9}$. 
\end{proof}

\begin{Definition}
Let $f$ be a homeomorphism on $X$ and let $\mu$ be a Borel measure on $X$. Then, a point $x\in X$ is said to be a $\mu$-topologically stable point of $f$, if for every $\epsilon > 0$, there exists a $\delta > 0$  such that for every homeomorphism $g$ satisfying $d_{C^{0}}(f, g) \leq \delta$, there exists an upper semi-continuous compact valued map $H: \overline{\mathcal{O}_{g}(x)}\rightarrow 2^{X}$ with the measurable domain such that:
\begin{enumerate}
\item[(i)] $\mu(H(z)) = 0$, for each $z\in B(x, \frac{\delta}{4})\cap \overline{\mathcal{O}_{g}(x)}$.
\item[(ii)] $d(H, Id) \leq \epsilon$.
\item[(iii)] $f\circ H = H\circ g$.
\end{enumerate}
Additionally, $x$ is said to be a strong $\mu$-topologically stable point of $f$, if there exists a Borelian $B\subset X$ with $\mu(X\setminus B) = 0$ such that:
\begin{enumerate}
\item[(iv)] $\mu(X\setminus Dom(H))\leq \mu(X\setminus U)$, where $U = B\cap B(x, \delta)\cap \overline{\mathcal{O}_{g}(x)}$.
\end{enumerate}
The set of all $\mu$-topologically stable (strong $\mu$-topologically stable) points of $f$ is denoted by $Ts_{f}^{\mu}(X)$ ($Sts_{f}^{\mu}(X)$).
\label{D5.2}
\end{Definition}

\begin{Proposition}
Let $f$ be a homeomorphism on $X$ and let $\mu$ be a Borel measure on $X$. Then, the following statements are true:
\begin{enumerate}
\item[(i)] If $\mu$ is topologically stable (with respect to $f$), then $TS_{f}^{\mu}(X) = X$.
\item[(ii)] For every homeomorphism $h: X\rightarrow Y$, $Ts_{h\circ f\circ h^{-1}}^{h^{*}(\mu)}(Y)  = h(Ts_{f}^{\mu}(X))$ and $Sts_{h\circ f\circ h^{-1}}^{h^{*}(\mu)}(Y)  = h(Sts_{f}^{\mu}(X))$.
\item[(iii)] If $\mu$ is non-atomic and $x$ is a topologically stable point of $f$, then $x$ is a strong $\mu$-topologically stable point of $f$. 
\end{enumerate}
\label{T5.3}
\end{Proposition}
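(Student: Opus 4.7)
The proposition bundles three assertions of rather different flavor, each of which reduces to identifying the correct set-valued map $H$ and verifying Conditions (i)--(iv) of Definition \ref{D5.2} by inspection. The plan is to handle them in order, with the main substantive step being part (iii), where non-atomicity of $\mu$ converts a continuous conjugacy into a measure-theoretic stability certificate.

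For part (i), I would start from the $\delta$ provided by topological stability of $\mu$ at level $\epsilon$. For any $g$ with $d_{C^{0}}(f,g)\leq\delta$, that hypothesis yields an upper semi-continuous compact valued $H\colon X\to 2^{X}$ with measurable domain satisfying the four conditions in the definition of topologically stable Borel measure. I would restrict $H$ to $\overline{\mathcal{O}_{g}(x)}$; since this set is closed and $g$-invariant, upper semi-continuity, compact-valuedness, measurability of the domain, and the equivariance $f\circ H=H\circ g$ all transfer cleanly, and the global condition $\mu(H(z))=0$ in particular gives the required vanishing on $B(x,\delta/4)\cap\overline{\mathcal{O}_{g}(x)}$.

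For part (ii), I would follow the conjugation template of Proposition \ref{T4.5}. Given $y=h(z)$ with $z\in Ts_{f}^{\mu}(X)$ and an $\epsilon>0$, I would first pick an auxiliary $\epsilon'>0$ so that $h$-images of $\epsilon'$-close pairs are $\epsilon$-close on the $Y$-side, apply $\mu$-topological stability at $z$ with parameter $\epsilon'$ to obtain $\delta$, then choose $\delta'$ so that any $g'$ on $Y$ with $d_{C^{0}}(h\circ f\circ h^{-1},g')\leq\delta'$ pulls back to $g=h^{-1}\circ g'\circ h$ with $d_{C^{0}}(f,g)\leq\delta$ and simultaneously $h^{-1}(B(y,\delta'))\subset B(z,\delta)$ via uniform continuity of $h^{-1}$. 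Transporting $H$ via $H'(w)=h(H(h^{-1}(w)))$ on $h(\overline{\mathcal{O}_{g}(z)})=\overline{\mathcal{O}_{g'}(y)}$, the identity $h^{*}(\mu)(H'(w))=\mu(H(h^{-1}(w)))$ transports the vanishing and domain-measure conditions, while equivariance and the diameter bound translate through $h$. The reverse inclusion comes from swapping $h$ with $h^{-1}$ and $f$ with $h\circ f\circ h^{-1}$ as in Proposition \ref{T4.5}, and the strong variant uses $h(B)$ as the new Borelian.

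Part (iii) is the main substance. Given $x$ topologically stable and $\epsilon>0$, I would take the corresponding $\delta$ and, for any $g$ with $d_{C^{0}}(f,g)\leq\delta$, promote the continuous conjugacy $\tilde h\colon\overline{\mathcal{O}_{g}(x)}\to X$ to a set-valued map by $H(z)=\{\tilde h(z)\}$. Upper semi-continuity of $H$ is immediate from continuity of $\tilde h$ (any open neighborhood of $\tilde h(z)$ pulls back to an open neighborhood of $z$ on which $H$ stays inside it), $H$ is compact-valued with closed hence Borel domain $\overline{\mathcal{O}_{g}(x)}$, the equation $f\circ\tilde h=\tilde h\circ g$ gives $f\circ H=H\circ g$, and $d(\tilde h(z),z)\leq\epsilon$ gives $d(H,Id)\leq\epsilon$. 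Non-atomicity of $\mu$ is exactly what guarantees $\mu(H(z))=\mu(\{\tilde h(z)\})=0$ for every $z$, which is strictly stronger than Condition (i) of Definition \ref{D5.2}. For the strong version I would take $B=X$, giving $\mu(X\setminus B)=0$ and $U=B(x,\delta)\cap\overline{\mathcal{O}_{g}(x)}\subset Dom(H)$, so that the inequality $\mu(X\setminus Dom(H))\leq\mu(X\setminus U)$ holds automatically. The only piece of real verification is upper semi-continuity of the singleton map, which is the main — and very mild — obstacle of the proof.
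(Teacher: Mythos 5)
Your proposal is correct and follows essentially the same route as the paper: (i) by restricting the globally given map to $\overline{\mathcal{O}_{g}(x)}$ (the paper simply cites the definitions), (ii) by the same conjugation transport $K=h\circ H\circ h^{-1}$ used in Proposition \ref{T4.5} with $h(B)$ as the new Borelian, and (iii) by promoting the continuous semiconjugacy to the singleton-valued map $H(z)=\lbrace \tilde h(z)\rbrace$ with $B=X$ and invoking non-atomicity. The only nuance is in (ii): choose $\delta'$ so that $\delta'$-closeness in $Y$ already forces $\delta/4$-closeness in $X$ (as the paper does), so that the vanishing condition on the $\delta'/4$-ball at $y$ is covered, not merely $h^{-1}(B(y,\delta'))\subset B(z,\delta)$.
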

\begin{proof}
Let $f$ be a homeomorphism on $X$ and let $\mu$ be a Borel measure on $X$.
\begin{enumerate}
\item[(i)] Proof follows from corresponding definitions.
\item[(ii)] Let $h: X\rightarrow Y$ be a homeomorphism. Let $y\in h(Ts_{f}^{\mu}(X))$ and choose $z\in Ts_{f}^{\mu}(X)$ such that $y = h(z)$. We claim that, $y\in Ts_{h\circ f \circ h^{-1}}^{h^{*}(\mu)}(Y)$. For a given $\epsilon > 0$, choose an $\eta > 0$ such that $d^{X}(x_{1},x_{2}) \leq \eta$ implies $d^{Y}(h(x_{1}), h(x_{2})) \leq \epsilon$, for all $x_{1}, x_{2}\in X$. For this $\eta$, choose a $\gamma > 0$ (and a Borelian $B\subset X$) by $\mu$-topological stability (strong $\mu$-topological stability) of $z$. Choose a $\delta > 0$ such that $d^{Y}(y_{1}, y_{2}) \leq \delta$ implies $d^{X}(h^{-1}(y_{1}), h^{-1}(y_{2})) \leq \frac{\gamma}{4}$, for all $y_{1}, y_{2}\in Y$. Clearly, $h^{*}(\mu)(Y\setminus h(B)) = \mu(h^{-1}(h(X)\setminus h(B))) = \mu(h^{-1}(h(X\setminus B))) = 0$. If $g$ is a  homeomorphism on $Y$ satisfying $d_{C^{0}}(h\circ f\circ h^{-1}, g) \leq \delta$, then $h^{-1}\circ g\circ h$ is a homeomorphism on $X$ satisfying $d_{C^{0}}(f, h\circ g\circ h^{-1}) \leq \gamma$. Hence, there exists an upper semi-continuous compact valued map $H: \overline{\mathcal{O}_{h^{-1}\circ g\circ h}(z)}\rightarrow 2^{X}$ with the measurable domain satisfying $\mu(H(u)) = 0$, for each $u\in B(z, \frac{\gamma}{4})\cap \overline{\mathcal{O}_{h^{-1}\circ g\circ h}(z)}$, $d^{X}(H, Id) \leq \eta$ and $f\circ H = H\circ h^{-1}\circ g\circ h$. 
\medskip

Since $\overline{\mathcal{O}_{h^{-1}\circ g\circ h}(z)} = h^{-1}(\overline{\mathcal{O}_{g}(y)})$, we can define $K : \overline{\mathcal{O}_{g}(y)} \rightarrow 2^{Y}$ by $K(u) = h\circ H\circ h^{-1}(u)$, for each $u\in \overline{\mathcal{O}_{g}(y)}$. Clearly, $K$ is an upper semi-continuous compact-valued map. Since, $u\in Dom(K)$ if and only if $u\in h(Dom(H))$, $K$ has measurable domain. Since, $h^{-1}( B(y, \frac{\delta}{4})\cap \overline{\mathcal{O}_{g}(y)})\subset B(z, \frac{\gamma}{4})\cap \overline{\mathcal{O}_{h^{-1}\circ g\circ h}(z)}$, we have $h^{*}(\mu)(K(u)) = \mu(h^{-1}\circ h\circ H\circ h^{-1}(u)) = \mu(H\circ h^{-1}(u)) = 0$, for each $u\in B(y, \frac{\delta}{4})\cap \overline{\mathcal{O}_{g}(y)}$. Since $d^{X}(H, Id) \leq \eta$, we have $d^{Y}(K, Id) \leq \epsilon$. Also, $h\circ f\circ h^{-1}\circ K = h\circ f\circ h^{-1}\circ h\circ H\circ h^{-1} = h\circ f \circ H\circ h^{-1} =  h\circ H\circ h^{-1}\circ g\circ h\circ  h^{-1} = K\circ g$. Moreover, suppose that $\mu(X\setminus Dom(H))\leq \mu(X\setminus U)$, where $U = B\cap B(z, \gamma)\cap \overline{\mathcal{O}_{h^{-1}\circ g\circ h}(z)}$. Set $V = h(B)\cap B(y, \delta)\cap \overline{\mathcal{O}_{g}(y)}$. Clearly, $h^{-1}(V)\subset U$. Therefore, $h^{*}(\mu)(Y\setminus Dom(K)) \leq h^{*}(\mu)(Y\setminus V)$. Hence, $ h(Ts_{f}^{\mu}(X))\subset Ts_{h\circ f\circ h^{-1}}^{h^{*}(\mu)}(Y)$ ($h(Sts_{f}^{\mu}(X))\subset Sts_{h\circ f\circ h^{-1}}^{h^{*}(\mu)}(Y)$). Replace $h$ by $h^{-1}$, $f$ by $h\circ f\circ h^{-1}$ and $\mu$ by $h^{*}(\mu)$ in the last inclusion to complete the proof.

\item[(iii)] Let $\mu$ be a non-atomic measure and let $x$ be a topologically stable point of $f$. For a given $\epsilon > 0$, choose a $\delta > 0$ by the topological stability of $x$ and fix $B = X$. Let $g$ be a homeomorphism satisfying $d_{C^{0}}(f, g) \leq \delta$. By the topological stability of $x$, there exists a continuous map $h : \overline{\mathcal{O}_{g}(x)}\rightarrow X$ satisfying $f\circ h = h\circ g$ and $d(h(y), y) \leq \epsilon$, for each $y\in \overline{\mathcal{O}_{g}(x)}$. Define a set valued map $H: \overline{\mathcal{O}_{g}(x)}\rightarrow 2^{X}$ by $H(y) = \lbrace h(y)\rbrace$, for each $y\in \overline{\mathcal{O}_{g}(x)}$. It is easy to check that, $H$ is an upper semi-continuous compact valued map with the measurable domain and satisfies Definition \ref{D5.2}(i)-(iv). Hence, $x$ is a strong $\mu$-topologically stable point of $f$.
\end{enumerate}
\end{proof}

\begin{theorem}
Let $f$ be a homeomorphism on $X$ and let $\mu$ be a Borel measure on $X$. If $x\in X$ is a $\mu$-uniformly expansive point of $f$, then $x$ is a $\mu$-topologically stable point of $f$. Moreover, if $x$ is also a $\mu$-shadowable point of $f$, then $x$ is a strong $\mu$-topologically stable point of $f$.
\label{C5.10}
\end{theorem}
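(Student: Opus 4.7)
The approach will be to construct, for every $g$ close to $f$, the natural tracking cone
\[
H(z) \;=\; \bigcap_{n\in\mathbb{Z}} f^{-n}\bigl(B[g^n(z),\eta]\bigr), \qquad z\in \overline{\mathcal{O}_g(x)},
\]
with tracking radius $\eta$ chosen much smaller than both $\epsilon$ and the expansivity constant $\mathfrak{c}$ for $\mu$-uniform expansivity of $f$ at $x$. Concretely, I will set $\eta=\tfrac{1}{4}\min\{\epsilon,\mathfrak{c}\}$; for the first assertion any $\delta\in(0,\eta]$ will do, while for the strong variant I will pull back from $\mu$-shadowability at $x$, applied to the precision $\eta$, a constant $\delta_0>0$ and a Borelian $B\subset X$ with $\mu(X\setminus B)=0$, and then pick $\delta<\min\{\delta_0,\eta\}$.

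The structural axioms will drop out directly from the form of $H$: each $H(z)$ is an intersection of closed balls, hence compact; upper semi-continuity is a standard convergence argument (if $z_k\to z$ and $y_k\in H(z_k)$ with $y_k\to y$, then continuity of $f^n,g^n$ forces $y\in H(z)$); the semiconjugacy $f\circ H=H\circ g$ is an index-shift identity; and since $z$ itself is compared at $n=0$, one has $d(H,\mathrm{Id})\leq\eta\leq\epsilon$. Measurability of $\mathrm{Dom}(H)$ is automatic, since for an upper semi-continuous compact-valued map the set $\{z:H(z)\neq\emptyset\}$ is closed in $\overline{\mathcal{O}_g(x)}$, hence Borel in $X$.

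The two substantive points are axiom (i) of Definition \ref{D5.2} and, in the strong case, the domain bound (iv). For (i), I fix $z\in B(x,\delta/4)\cap\overline{\mathcal{O}_g(x)}$; if $H(z)=\emptyset$ there is nothing to prove, so pick any $y_0\in H(z)$. The triangle inequality places $y_0$ (and every other $y\in H(z)$) inside $B(x,\mathfrak{c})$, and a second triangle inequality on iterates yields $d(f^n(y),f^n(y_0))\leq 2\eta\leq\mathfrak{c}$ for every $n\in\mathbb{Z}$. Thus $H(z)\subset\Gamma_f^{\mathfrak{c}}(y_0)$, and $\mu$-uniform expansivity of $f$ at $x$, applied at $y_0\in B(x,\mathfrak{c})$, gives $\mu(H(z))\leq\mu(\Gamma_f^{\mathfrak{c}}(y_0))=0$.

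For the strong variant, I just need to upgrade $\mathrm{Dom}(H)$ so as to cover $U=B\cap B(x,\delta)\cap\overline{\mathcal{O}_g(x)}$. For any $z\in U$ the sequence $\{g^n(z)\}_{n\in\mathbb{Z}}$ is a $\delta_0$-pseudo orbit for $f$ through $B\cap B(x,\delta_0)$ (because $d(f(g^n(z)),g^{n+1}(z))\leq \delta<\delta_0$ and $z\in B\cap B(x,\delta)\subset B\cap B(x,\delta_0)$), so $\mu$-shadowability at $x$ produces a point $\eta$-tracing it, which lies in $H(z)$ by construction. Hence $U\subset\mathrm{Dom}(H)$, yielding $\mu(X\setminus\mathrm{Dom}(H))\leq\mu(X\setminus U)$. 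The main place I expect to exercise care is the simultaneous fitting of constants, so that the triangle estimates keep everything inside $B(x,\mathfrak{c})$ and within iterate-distance $\mathfrak{c}$, while $\delta$ is still small enough to turn the $g$-orbit into a genuine $\delta_0$-pseudo orbit admitting an $\eta$-shadower; once the constants line up, the two defining properties of $x$ plug in directly to give (i) and (iv).
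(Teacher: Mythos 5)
Your proposal is correct and follows essentially the same route as the paper: the same tracking map $H(z)=\bigcap_{n\in\mathbb{Z}}f^{-n}\bigl(B[g^{n}(z),\eta]\bigr)$ on $\overline{\mathcal{O}_{g}(x)}$, the same verification of compactness, upper semi-continuity, the semiconjugacy and $d(H,\mathrm{Id})\leq\epsilon$, the inclusion $H(z)\subset\Gamma_{f}^{\mathfrak{c}}(y_{0})$ for condition (i), and the same use of $\mu$-shadowability of $x$ to get $U\subset \mathrm{Dom}(H)$ and hence (iv); the only cosmetic difference is that you apply the definition of $\mu$-uniform expansivity directly at $y_{0}\in B(x,\mathfrak{c})$ (after checking $H(z)\subset B(x,\mathfrak{c})$, which is indeed the point needing care since $\Gamma_{f}^{\mathfrak{c}}$ is anchored at $x$), whereas the paper rebases the expansivity at $w$ via Remark \ref{R4.3}(i) with constant $\mathfrak{c}/4$, forcing its smaller choice $\eta<\mathfrak{c}/8$. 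One small repair: measurability of $\mathrm{Dom}(H)$ is not ``automatic'' from upper semi-continuity, since the paper's definition of usc only constrains $H$ at points of $\mathrm{Dom}(H)$ and an usc compact-valued map can have non-closed domain; but the compactness/closed-graph argument you already sketch for usc (take $z_{k}\in \mathrm{Dom}(H)$ with $z_{k}\to z$, pick $y_{k}\in H(z_{k})$, pass to a convergent subsequence) shows directly that $\mathrm{Dom}(H)$ is closed, which is exactly the paper's argument.
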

\begin{proof}
We prove the later case. Former case follows by choosing $\delta = \eta$ in the proof.
Let $f$ be a homeomorphism on $X$ and let $\mu$ be a Borel measure on $X$. Let $x\in X$ be a $\mu$-uniformly expansive $\mu$-shadowable point of $f$ with expansivity constant $\mathfrak{c}$. For a given $\epsilon > 0$, fix $0 < \eta < \min\lbrace \frac{\mathfrak{c}}{8}, \epsilon\rbrace$. 
For this $\eta$, choose a $\delta > 0$ and a Borelian $B$ by shadowing at $x$. Choose a homeomorphism $g$ satisfying $d_{C^{0}}(f, g) \leq \delta$. Define a set-valued map $H: \overline{\mathcal{O}_{g}(x)}\rightarrow 2^{X}$ by $H(u) = \cap_{n\in \mathbb{Z}} f^{-n}(B_{\eta}[g^{n}(u)])$, for each $u\in \overline{\mathcal{O}_{g}(x)}$. Clearly, $H$ is a compact valued map.
We claim that, $Dom(H)$ is measurable. Choose a sequence $x_{k}\in Dom(H)$ converging to $z\in X$. Since $\overline{\mathcal{O}_{g}(x)}$ is closed, $z\in \overline{\mathcal{O}_{g}(x)}$. Choose $y_{k}\in X$ such that $d(f^{n}(y_{k}), g^{n}(x_{k})) \leq \eta$, for each $k\in \mathbb{N}$ and for each $n\in \mathbb{Z}$. Since $X$ is compact, we can assume that $y_{k}$ converges to $y\in X$. But then $y\in H(z)$ and hence $z\in Dom(H)$. Therefore, $Dom(H)$ is closed and hence measurable.
\medskip

Choose $w\in V = B(x, \frac{\delta}{4})\cap \overline{\mathcal{O}_{g}(x)}$ and fix $y\in H(w)$. If $z\in H(w)$, we have $d(f^{n}(z), g^{n}(w))\leq \eta$, for each $n\in \mathbb{Z}$. Since $y\in H(w)$, we have $d(f^{n}(y), f^{n}(z) \leq d(f^{n}(y),g^{n}(w)) +  d(g^{n}(w), f^{n}(z)) \leq 2\eta < \frac{\mathfrak{c}}{4}$. Therefore, $H(w)\subset \Gamma_{f}^{\frac{\mathfrak{c}}{4}}(y)$. Since, $y\in B(w, \frac{\mathfrak{c}}{4})$ and $w\in B(x, \frac{\mathfrak{c}}{4})$, from Remark \ref{R4.3}(i) we have $\mu(H(w))\leq \mu(\Gamma_{f}^{\frac{\mathfrak{c}}{4}}(y)) = 0$. Since, $w$ is chosen arbitrarily, $\mu(H(w)) = 0$, for each $w\in V$. Moreover $H(z)\subset B[z, \eta]$, for each $z\in \overline{\mathcal{O}_{g}}(x)$ and hence $d(H, Id)\leq \epsilon$. 
Now, we prove that $f\circ H = H\circ g$. If $z\in Dom(H)$, then $H(z)\neq \phi$ and hence $f(H(z)) = f(\cap_{n\in \mathbb{Z}} f^{-n}(B[g^{n}(z), \eta])) = \cap_{n\in \mathbb{Z}} f^{-n+1}(B[g^{n}(z), \eta]) = \cap_{n\in \mathbb{Z}} f^{-n}(B[g^{n}(g(z)), \eta]) = H(g(z))$. Thus $f\circ H = H\circ g$ in $Dom(H)$. If $z\notin Dom(H)$, then $g(z)\notin Dom(H)$ and hence $f(H(z)) = \phi = H(g(z))$. Therefore,  $f\circ H = H\circ g$ in $\overline{\mathcal{O}_{g}(x)}\setminus Dom(H)$ also. 
\medskip

To prove that $H$ is upper semi-continuous, choose a $w\in Dom(H)$ and a neighborhood $O$ of $H(w)$. Set $H(y) = \cap_{i=0}^{\infty} H_{i}(y)$ and $H_{i}(y) = \cap_{n=-i}^{i}f^{-n}(B[g^{n}(y), \eta])$. Clearly, each $H_{i}(y)$ is compact and $H_{i+1}(y)\subset H_{i}(y)$, for every $i\geq 0$ and for each $y\in \overline{\mathcal{O}_{g}(x)}$. For $y = w$, we obtain $i\geq 0$ such that $H_{i}(w)\subset O$. We claim that, there exists a $\gamma > 0$ such that $H_{i}(y)\subset O$ whenever $d(w, y) < \gamma$, for each $y\in \overline{\mathcal{O}_{g}(x)}$. On the contrary, choose a sequence $y_{k}\rightarrow w$ in  $\overline{\mathcal{O}_{g}(x)}$ and $z_{k}\in H_{i}(y_{k})\setminus O$, for each $k\in \mathbb{N}$. Since $X$ is compact, we can assume that $z_{k}\rightarrow z$, for some $z\in X\setminus O$. Since $z_{k}\in H_{i}(y_{k})$, we have $d(f^{n}(z_{k}), g^{n}(y_{k}))\leq \eta$, for each $k\in \mathbb{N}$ and for all $-i\leq n\leq i$. Therefore, $d(f^{n}(z), g^{n}(w))\leq \eta$, for all $-i\leq n\leq i$ and hence $z\in H_{i}(w)$. Since $z\notin O$ and $H_{i}(w)\subset O$, we get a contradiction. Since $H(y)\subset H_{i}(y)$, there exists a $\gamma > 0$ such that $H(y)\subset O$ whenever $d(w, y) < \gamma$, for each $y\in \overline{\mathcal{O}_{g}(x)}$. 
\medskip

Now, note that $d_{C^{0}}(f,g) \leq \delta$ implies that $\lbrace g^{n}(z)\rbrace_{n\in \mathbb{Z}}$ is a $\delta$-pseudo orbit for $f$ through $z$, for each $z\in U = B\cap B(x, \delta)\cap \overline{\mathcal{O}_{g}(x)}$. Choose $y\in X$ such that $d(f^{n}(y), g^{n}(z))\leq \eta$, for each $n\in \mathbb{Z}$. Therefore, $H(z)\neq \phi$, for each $z\in U$ and hence $ U\subset Dom(H)$ implying $\mu(X\setminus Dom(H))\leq \mu(X\setminus U)$.
\end{proof}

\begin{Corollary}
Let $f$ be a homeomorphism on $X$ and let $\mu$ be a Borel measure on $X$. If $x\in X$ is a uniformly expansive point of $f$, then $x$ is a $\mu$-topologically stable point of $f$. Moreover, if $x$ is also a shadowable point of $f$, then $x$ is a strong $\mu$-topologically stable point of $f$.
\label{C5.11}
\end{Corollary}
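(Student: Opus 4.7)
The plan is to derive Corollary \ref{C5.11} directly from Theorem \ref{C5.10} by observing that both uniform expansivity and shadowability are stronger than their $\mu$-flavored counterparts, so the hypotheses of Corollary \ref{C5.11} immediately imply those of Theorem \ref{C5.10} for every (non-atomic) Borel measure $\mu$ on $X$.

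The first and only non-routine step is the inclusion $U_{f}(X)\subset UE_{f}^{\mu}(X)$. If $x\in U_{f}(X)$ with uniform expansivity constant $\mathfrak{c}$, then $f$ is expansive on $B(x,\mathfrak{c})$ with expansivity constant $\mathfrak{c}$. For any $z\in B(x,\mathfrak{c})$ I would argue that $\Gamma_{f}^{\mathfrak{c}}(z)\subset \lbrace z\rbrace$: any $y\in \Gamma_{f}^{\mathfrak{c}}(z)$ distinct from $z$ would give a pair of distinct points of $B(x,\mathfrak{c})$ whose entire bi-infinite $f$-orbits stay within distance $\mathfrak{c}$, contradicting expansivity on $B(x,\mathfrak{c})$ with constant $\mathfrak{c}$. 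Under the standing non-atomic convention on $\mu$ in this section this yields $\mu(\Gamma_{f}^{\mathfrak{c}}(z))=0$ for each $z\in B(x,\mathfrak{c})$, so $x\in UE_{f}^{\mu}(X)$ with the same constant $\mathfrak{c}$.

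The second implication, that every shadowable point of $f$ is a $\mu$-shadowable point of $f$ for every Borel measure $\mu$, is already recorded in Remark \ref{R4.3}(iii) (one simply takes the Borelian $B=X$ in Definition \ref{D4.2}). With these two reductions in hand, Theorem \ref{C5.10} applies verbatim: the $\mu$-uniform expansivity of $x$ gives $x\in Ts_{f}^{\mu}(X)$, and the additional shadowability hypothesis, now promoted to $\mu$-shadowability, upgrades the conclusion to $x\in Sts_{f}^{\mu}(X)$. I do not foresee any real obstacle here; the substantive analytic content sits entirely inside Theorem \ref{C5.10}, and Corollary \ref{C5.11} is just its repackaging after the two one-line observations above.
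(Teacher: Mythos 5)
Your proposal is correct and is essentially the paper's own route: the paper states this corollary without proof, as an immediate consequence of Theorem \ref{C5.10} combined with Remark \ref{R4.3}(ii)--(iii), and your inclusion $U_{f}(X)\subset UE_{f}^{\mu}(X)$ via $\Gamma_{f}^{\mathfrak{c}}(z)\subset\lbrace z\rbrace$ is exactly the content of Remark \ref{R4.3}(ii). One small caveat: the non-atomicity of $\mu$, which your singleton argument genuinely needs, is not a blanket convention in the paper (only non-triviality $\mu(X)>0$ is assumed globally), so it should be invoked explicitly, just as Remark \ref{R4.3}(ii) restricts to non-atomic measures.
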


\textbf{Acknowledgements:} The first author is supported by CSIR-Junior Research Fellowship (File No.-09/045(1558)/2018-EMR-I) of  Government of India.

\end{document}